\newcommand{\cw}{{\curlywedge}}
\newcommand{\F}{{\mathbb F}}
\newcommand{\C}{{\mathbb C}}
\newcommand{\f}{\frac}
\newcommand{\lo}{\longrightarrow}
\renewcommand{\Im}{{\rm Im}}
\newtheorem{theorem}{Theorem}[section]
\newtheorem{corollary}[theorem]{Corollary}
\newtheorem{lemma}[theorem]{Lemma}
\newtheorem{proposition}[theorem]{Proposition}
\theoremstyle{definition}
\newcommand{\ds}{\displaystyle}
\newtheorem{example}[theorem]{Example}
\title{\bf The exterior product and homology of Hom-Lie algebras}
\author{Negur Shahni Karamzadeh , Seyedeh Narges Hosseini\footnote{
Corresponding author.} , Ali Reza Salemkar\\
{\small Department of Mathematics, Faculty of Mathematical
Sciences, Shahid Beheshti University, Tehran, Iran
}\\
{\small n$_{-}$shahni@sbu.ac.ir~,~narges.hosseini90@gmail.com~,~
salemkar@sbu.ac.ir}}
\date{ }
\begin{document}
\maketitle
\begin{abstract}
In this article, we use the theory of $($non-abelian$)$ exterior
product of Hom-Lie algebras to prove the Hopf's formula for these
algebras. As an application, we construct an eight-term sequence
in the homology of Hom-Lie algebras. We also investigate the
capability property of Hom-Lie algebras via the exterior product.
\\[.2cm]
{\it Keywords:} Hom-Lie algebra, non-abelian exterior product,
Hom-Lie homology, capable Hom-Lie,
Schur multiplier.\\
{\it Mathematics Subject Classification 2020}: 17A30, 17B61,
18G90.
\end{abstract}

\section{Introduction}
In \cite{E,E1}, Ellis introduced the notions of the non-abelian
tensor and exterior products of Lie algebras and gave some of
their basic properties. He investigated their relations to the low
dimensional homology of Lie algebras. In particular, he proved
that for any Lie algebra $L$ over a field $\F$, $H_2(L)$, the
second homology of $L$ with coefficients in the trivial $L$-module
$\F$, is isomorphic to the kernel of the commutator map
$\lambda_L:L\wedge L\lo L$. Moreover, he showed how the tensor
product is related to the universal central extensions. In
\cite{NPR}, Niroomand et al. dealt with some of the applications
of exterior product to the notion of capability of Lie algebras.
There is a series of papers (for instance, see
\cite{ASR,EHS,IKL,K,K1,NJP,SE,SEA}) emphasizing the relevance of
these notions to the development and exposition of the basic
theories of the capability and the second homology of Lie
algebras.\vspace{.2cm}

Hom-Lie algebras were originally introduced in \cite{HLS}, to
construct deformations of the Witt algebra (which is the Lie
algebra of derivations on the Laurent polynomial algebra
$\C[x,x^{-1}]$), and the Virasoro algebra (which is a complex Lie
algebra defined as the unique central extension of the Witt
algebra), and used in \cite{LS} to study quantum deformations and
discretisation of vector fields via twisted derivations. Hom-Lie
algebras are also useful tools in studying mathematical physics.
In this regard, Yau (\cite{Y1,Y2}) gave some applications of these
algebras to a generalization of the Yang-Barter equation and to
braid group representations. Hom-Lie algebras are $\F$-vector
spaces endowed with a bilinear skew-symmetric bracket satisfying a
Jacobi identity twisted by a map. When this map is identity, then
the definition of Lie algebras is recovered. Hence, one is
motivated  to investigate some appropriate results of the Lie
algebra theory in the general setting of the category of Hom-Lie
algebras. Accordingly, several concepts of Lie algebras have been
generalized to Hom-Lie algebras. Hom-Lie algebras on semisimple
Lie algebras in \cite{JL}, representation theory and (co)homology
theory in \cite{AEM, CS,S,Y}, universal central extensions  in
\cite{CIP} and finally, the theory of tensor products of Hom-Lie
algebras in \cite{CKR}, are studied. Recently, Casas and
Garcia-Martinez (\cite{CG1}) introduced the notions of
(non-abelian) exterior product and capability of Hom-Lie algebras
and extended some classical results from the Schur multiplier of
Lie algebras to the multiplier of Hom-Lie algebras. In this paper,
we continue the same line of research of Casas and
Garcia-Martinez. In fact, we obtain further structure properties
of the exterior products of Hom-Lie algebras, and use them to give
several homology results, generalizing known ones for Lie
algebras. In particular, we prove the Hopf's formula for Hom-Lie
algebras. Also, the relationship between the exterior product and
the capability is studied.\vspace{.15cm}

This paper is organized as follows: In Section 2, we review some
basic definitions regarding Hom-Lie algebras, with a special
mention of the Hom-action and the homology of Hom-Lie algebras.
Also, in this section we give some necessary results for the
subsequent use in this paper. In Section 3, we recall and gather
some general results related to the tensor and exterior products
of Hom-Lie algebras. In addition, we provide some relations
between these products and a generalized version of Whitehead's
universal quadratic functor for Hom-vector spaces. In Section 4,
several key results from the theory of homology of Lie algebras
are extended to Hom-Lie algebras. In particular, we describe the
second homologies of Hom-Lie algebras as central subalgebras of
their exterior products, and obtain an exact sequence of eight
terms associated with an extension of Hom-Lie algebras in the
homology. In the last section, we show that to investigate the
capability of non-perfect Hom-Lie algebras, we only need to study
those Hom-Lie algebras whose companion endomorphisms are
surjective. We also establish the capability property for Hom-Lie
algebras which are perfect or abelian. Furthermore, a relation
between the tensor
and exterior centers of a Hom-Lie algebra is given.\vspace{.2cm}\\
{\large{\bf Notations}}. Throughout this paper, all vector spaces
and algebras are considered over some fixed field $\F$ and linear
maps are $\F$-linear maps. We write $\otimes$ and $\wedge$ for the
usual tensor and exterior products of vector spaces over $\F$,
respectively. For any vector space (resp., Hom-Lie algebra) $L$, a
subspace (resp., an ideal) $L'$ and $x\in L$, we write $\bar x$ to
denote the coset $x+L'$. If $A$ and $B$ are subspaces of a vector
space $V$ for which $V=A+B$ and $A\cap B=0$, we will write
$V=A\dot{+}B$.
\section{Preliminaries on Hom-Lie algebras}
In this section, we recall some basic concepts and auxiliary
facts, which will be needed in the sequel.
\subsection{Basic definitions}
A {\it Hom-Lie algebra} $(L,\alpha_L)$ is a non-associative
algebra $L$ together with a linear map $\alpha_L:L\longrightarrow
L$ satisfying the conditions

$(i)$
$[x,y]=-[y,x]$,\hspace{10.35cm}(skew-symmetry)\vspace{-.05cm}

$(ii)$
$[\alpha_L(x),[y,z]]+[\alpha_L(y),[z,x]]+[\alpha_L(z),[x,y]]=0$,\hspace{4cm}
(Hom-Jacobi identity)\\ for all $x,y,z\in L$, where $[~,~]$
denotes the product in $L$. In the whole paper we only deal with
(the so-called {\it multiplicative}) Hom-Lie algebras
$(L,\alpha_L)$ such that $\alpha_L$ preserves the product, that
is, $\alpha_L([x,y])=[\alpha_L(x),\alpha_L(y)]$ for all $x,y\in
L$. The Hom-Lie algebra $(L,\alpha_L)$ is said to be {\it regular}
if $\alpha_L$ is bijective. Taking $\alpha_L=id_L$, we recover
exactly Lie algebras. A {\it Hom-vector space} is a pair
$(V,\alpha_V)$, where $V$ is a vector space and $\alpha_V:V\lo V$
is a linear map. If we put $[x,y]=0$ for all $x,y\in V$, then
$(V,\alpha_V)$ is a Hom-Lie algebra, which is called an {\it
abelian Hom-Lie algebra}.

A {\it homomorphism} of Hom-Lie algebras
$\delta:(L_1,\alpha_{L_1})\longrightarrow(L_2,\alpha_{L_2})$ is an
algebra homomorphism from $L_1$ to $L_2$ such that $\delta
\circ\alpha_{L_1}=\alpha_{L_2}\circ\delta$. The corresponding
category of Hom-Lie algebras is denoted by $\bf{HomLie}$. As this
category is a variety of $\Omega$-groups in the sense of Higgins
\cite{H}, it is a semi-abelian category. Therefore, the
$3\times3$-Lemma and the Snake Lemma hold in this category
\cite{BB}. We refer the reader to \cite{CG} for obtaining more
information on this category.

Let $(L,\alpha_L)$ be a Hom-Lie algebra. The following is a list
of the concepts which will be used:

$(i)$ A (Hom-Lie) {\it subalgebra} $(H,\alpha_H)$ of
$(L,\alpha_L)$ consists of a vector subspace $H$ of $L$, which is
closed under the product and invariant under the map $\alpha_L$,
together with the linear self-map $\alpha_H$ being the restriction
of $\alpha_L$ on $H$. In such a case we may write $\alpha_L|$ for
$\alpha_H$. A subalgebra $(H,\alpha_H)$ is an {\it ideal} if
$[x,y]\in H$ for all $x\in H$, $y\in L$.

$(ii)$ The {\it center} of $(L,\alpha_L)$ is the vector space
$Z(L)=\{x\in L~|~[x,y]=0$, for all $y\in L\}$.\\
Put $Z_{\alpha}(L)=\{x\in L~|~[\alpha_L^k(x),y]=0,$ for all $y\in
L, k\geq0\}$, where $\alpha_L^0=id_L$ and $\alpha_L^k$, $k\geq1$,
denotes the composition of $\alpha_L$ with itself $k$ times. It is
straightforward to see that
$(Z_{\alpha}(L),\alpha_{Z_{\alpha}(L)})$ is the largest central
ideal of $(L,\alpha_L)$. We call $Z_{\alpha}(L)$ the $\alpha$-{\it
center} of $(L,\alpha_L)$. When $\alpha_L$ is a surjective
homomorphism or $(L,\alpha_L)$ is abelian, then
$Z_{\alpha}(L)=Z(L)$.

$(iii)$ If $(H,\alpha_H)$ and $(K,\alpha_K)$ are two ideals of
$(L,\alpha_L)$, then the {\it$($Higgins$)$ commutator} of
$(H,\alpha_H)$ and $(K,\alpha_K)$, denoted by
$([H,K],\alpha_{[H,K]})$, is a Hom-Lie subalgebra spanned by the
elements $[h,k]$, $h\in H$, $k\in K$. Note that
$([H,K],\alpha_{[H,K]})$ is an ideal of $(H,\alpha_H)$ and
$(K,\alpha_K)$. Especially, $([L,L],\alpha_{[L,L]})$ is an ideal
of $(L,\alpha_L)$. The quotient $(L/[L,L],\bar\alpha_L)$ is called
the {\it abelianisation} of $(L,\alpha_L)$, and denoted by
$(L^{ab},\alpha_{L^{ab}})$. The Hom-Lie algebra $(L,\alpha_L)$ is
called {\it perfect} if $L=[L,L]$.

$(iv)$ An exact sequence
$(M,\alpha_M)\rightarrowtail(K,\alpha_K)\twoheadrightarrow(L,\alpha_L)$
of Hom-Lie algebras is a {\it central extension} of $(L,\alpha_L)$
if $M\subseteq Z(K)$, or equivalently, $[M,K]=0$.\vspace{.2cm}

Let $(M,\alpha_M)$ and $(N,\alpha_N)$ be two Hom-Lie algebras. By
a {\it Hom-action} of $(M,\alpha_M)$ on $(N,\alpha_N)$, we mean a
$\F$-bilinear map $M\times N\to N,
(m,n)\mapsto\hspace{-.1cm}~^mn$, satisfying the axioms

$(a)$$~^{[m,m']}\alpha_N(n)=~^{\alpha_M(m)}(~^{m'}n)
-\hspace{-.1cm}~^{\alpha_M(m')}(~^{m}n)$,

$(b)$$~^{\alpha_M(m)}[n,n']=[~^mn,\alpha_N(n')]+[\alpha_N(n),\hspace{-.1cm}~^mn']$,

$(c)$ $\alpha_N(~^mn)=~^{\alpha_M(m)}\alpha_N(n)$,\\
for all $m,m'\in M, n,n'\in N$. The action is called {\it trivial}
if $~^mn=0$, for all $m\in M$, $n\in N$. Also, if $(N,\alpha_N)$
is an abelian Hom-Lie algebra enriched with a Hom-action of
$(M,\alpha_M)$, then $(N,\alpha_N)$ is said to be a {\it
Hom-$M$-module} (see \cite{Y}). Clearly, if $(M,\alpha_M)$ is a
subalgebra of some Hom-Lie algebra $(L,\alpha_L)$ and
$(N,\alpha_N)$ is an ideal of $(L,\alpha_L)$, then the product in
$L$ induces a Hom-action of $(M,\alpha_M)$ on $(N,\alpha_N)$ given
by$~^mn=[m,n]$. In particular, there is a Hom-action of
$(L,\alpha_L)$ on itself given by the product in $L$.\vspace{.2cm}

Let $(M,\alpha_M)$ and $(L,\alpha_L)$ be Hom-Lie algebras together
with a Hom-action of $(L,\alpha_L)$ on $(M,\alpha_M)$. Their {\it
semi-direct product} $(M\rtimes L,\alpha_{\rtimes})$ is the
Hom-Lie algebra with the underlying vector space $M\dot{+}L$, the
endomorphism $\alpha_{\rtimes}:M\rtimes L\lo M\rtimes L$ given by
$\alpha_{\rtimes}(m,l)=(\alpha_M(m),\alpha_L(l))$, and the
product\vspace{-.15cm}
\[[(m_1,l_1),(m_2,l_2)]=([m_1,m_2]+\hspace{-.17cm}~^{\alpha_L(l_1)}m_2
+\hspace{-.17cm}~^{\alpha_L(l_2)}m_1, [l_1,l_2]).\vspace{-.13cm}\]
When $(L,\alpha_L)$ acts trivially on $(M,\alpha_M)$, we get the
direct sum structure of Hom-Lie algebras.

Let $(M,\alpha_M)\stackrel{i}\rightarrowtail(K,\alpha_K)
\stackrel{\zeta}\twoheadrightarrow(L,\alpha_L)$ be a split short
exact sequence of Hom-Lie algebras, that is, there exists a
homomorphism $\eta:(L,\alpha_L)\lo(K,\alpha_K)$ such that
$\zeta\circ\eta=id_L$. Then we can find a Hom-action of
$(L,\alpha_L)$ on $(M,\alpha_M)$ defined by
$~^lm=i^{-1}[\eta(l),i(m)]$ for all $m\in M$, $l\in L$.
Furthermore, we have the following commutative diagram with exact
rows:\vspace{.2cm}

\tikzset{node distance=3cm, auto}$~~~~$
\begin{tikzpicture}[%
back line/.style={densely dotted}, cross
line/.style={preaction={draw=white, -,line width=6pt}}]
\hspace{1.2cm}
\node(A1){\fontsize{9.5}{5}\selectfont$~$};
\node[right of=A1](B1){\fontsize{9.5}{5}\selectfont$(M,\alpha_M)$};
\node[right of=B1](C1){\fontsize{9.5}{5}\selectfont$(M\rtimes L,\alpha_{\rtimes})$};
\node[right of=C1](D1){\fontsize{9.5}{5}\selectfont$(L,\alpha_L)$};
\node(B2)[below of=B1, node distance=1.7cm]{\fontsize{9.5}{5}\selectfont$(M,\alpha_{M})$};
\node(C2)[below of=C1, node distance=1.7cm]{\fontsize{9.5}{5}\selectfont$(K,\alpha_K)$};
\node(D2)[below of=D1, node distance=1.7cm]{\fontsize{9.5}{5}\selectfont$(L,\alpha_{L}),$};

\draw[>->](B1) to node{$i$}(C1);
\draw[>->>](B1) to node[right]{\fontsize{9.5}{5}\selectfont$id_M$}(B2);
\draw[>->](B2) to node{\fontsize{9.5}{5}\selectfont$\subseteq$}(C2);
\draw[>->>](C1) to node[right]{\fontsize{9.5}{5}\selectfont$\xi$}(C2);
\draw[->>](C1) to node{\fontsize{9.5}{5}\selectfont$\rho$}(D1);
\draw[->>](C2) to node{\fontsize{9.5}{5}\selectfont$\zeta$}(D2);
\draw[>->>](D1) to node[right]{\fontsize{9.5}{5}\selectfont$id_L$}(D2);
\end{tikzpicture}\\
where $i(m)=(m,0)$, $\rho(m,l)=l$, and $\xi$ is an isomorphism
defined by $\xi(m,l)=m+\eta(l)$. In particular, if we put
$T=\eta(L)$, then $(T,\alpha_T)$ is a subalgebra of $(K,\alpha_K)$
such that $K=M\dot{+}T$.

A {\it crossed module} is a homomorphism
$\partial:(M,\alpha_M)\to(L,\alpha_L)$ of Hom-Lie algebras
together with a Hom-action of $(L,\alpha_L)$ on $(M,\alpha_M)$
such that $\partial(\hspace{-.1cm}~^lm)=[l,\partial(m)]$ and
$^{\partial(m)}{m'} =[m,m']$ for all $m,m'\in M$, $l\in L$. If
$(M,\alpha_M)$ is an ideal of $(L,\alpha_L)$, then the inclusion
map $(M,\alpha_M)\hookrightarrow(L,\alpha_L)$ is a crossed module.
It is worth noting that for any crossed module
$\partial:(M,\alpha_M)\to(L,\alpha_L)$,
$(\Im(\partial),{\alpha_L}_|)$ is an ideal of $(L,\alpha_L)$ and
$(\ker(\partial),{\alpha_M}_|)$ is a central subalgebra of
$(M,\alpha_M)$.
\subsection{The Schur multiplier and the homology of Hom-Lie algebras}
Consider the functors
$\mathfrak{U}:\bf{HomLie}\longrightarrow\bf{HomSet}$, from the
category of Hom-Lie algebras to the category of Hom-sets, that
assigns to any Hom-Lie algebra $(B,\alpha_B)$ the Hom-set obtained
by forgetting the operations, and
$\mathfrak{F_r}:\bf{HomSet}\longrightarrow\bf{HomLie}$, that
assigns to any Hom-set $(X,\alpha_X)$ the free Hom-Lie algebra
$\mathfrak{F_r}(X,\alpha_X)=(A_X/I,\bar\alpha_A)$ (here,
$(A_X,\alpha_A)$ is a non-associative Hom-algebra such that $A_X$
is the $\F$-algebra generated by the free magma $M_X$ and
$\alpha_A$ is the endomorphism induced by $\alpha_X$; and $I$ is
the two-side ideal of $(A_X,\alpha_A)$ spanned by the elements of
the forms\vspace{-.2cm}
\[~~~~~ab+ba~~~~{\rm and}~~~~
\alpha_A(a)(bc)+\alpha_A(b)(ca)+\alpha_A(c)(ab),\vspace{-.2cm}\]
where $a,b,c\in A_X$). It is proved in \cite{CG} that the functor
$\mathfrak{F_r}$ is the left adjoint to the functor
$\mathfrak{U}$. As an immediate consequence, we conclude that
every Hom-Lie algebra admits at least one free presentation.

Let $(L,\alpha_L)$ be an arbitrary Hom-Lie algebra with a free
presentation $(R,\alpha_R)\stackrel{\subseteq}{\rightarrowtail}(
F,\alpha_F)\stackrel{\rho}{\twoheadrightarrow}(L,\alpha_L)$. Then
the {\it Schur multiplier} of $(L,\alpha_L)$ is defined in
\cite{CG1} to be the abelian Hom-Lie algebra\vspace{-.05cm}
\[{\cal{M}}(L,\alpha_L)=\frac{(R,\alpha_R)\cap
([F,F],\alpha_{[F,F]})}{([F,R],\alpha_{[F,R]})}.\vspace{-.05cm}\]
As the category of Hom-Lie algebras is semi-abelian, Theorem 6.9
of \cite{EV} indicates that the Schur multiplier of $(L,\alpha_L)$
is independent of the choice of the free presentation of
$(L,\alpha_L)$.\vspace{.2cm}

The homology of Hom-Lie algebras, which is a generalization of the
Chevalley-Eilenberg homology of Lie algebras, is constructed as
follows: Let $(L,\alpha_L)$ be a Hom-Lie algebra and
$(M,\alpha_M)$ be a Hom-$L$-module. The homology of $(L,\alpha_L)$
with coefficients in $(M,\alpha_M)$, denoted by
$H_{\ast}^{\alpha}(L,M)$, is the homology of the Hom-chain complex
$(C_{\ast}^{\alpha}(L,M),d_{\ast})$, where
$C_{n}^{\alpha}(L,M)=(M\otimes L^{\wedge
n},\alpha_M\otimes\alpha_L^{\wedge n})$, $n\geq0$ ($L^{\wedge n}$
denotes the $n$-th exterior power of $L$, with $L^{\wedge0}=\F$),
and the boundary map $d_n:C_{n}^{\alpha}(L,M)\lo
C_{n-1}^{\alpha}(L,M)$, $n\geq1$, is a homomorphism of Hom-vector
spaces defined by
\begin{align*}
d_n&(m\otimes x_1\otimes\dots\otimes x_n)=\sum_{i=1}^n(-1)^i
(\hspace{-.1cm}~^{x_i}m\otimes\alpha_L(x_1)\wedge
\dots\wedge\widehat{\alpha_L(x_i)}\wedge\dots\wedge\alpha_L(x_n))\\
&+\sum_{1\leq i<j\leq
n}(-1)^{i+j}\alpha_M(m)\otimes[x_i,x_j]\wedge
\alpha_L(x_1)\wedge\dots\wedge\widehat{\alpha_L(x_i)}
\wedge\dots\wedge\widehat{\alpha_L(x_j)}
\wedge\dots\wedge\alpha_L(x_n),
\end{align*}
where the notation $\widehat{\alpha_L(x_i)}$ means that the
variable $\alpha_L(x_i)$ is omitted. It is obvious that
$H_{n}^{\alpha}(L,M)=(\ker(d_n)/\Im(d_{n-1}),\overline{\alpha_M\otimes\alpha_L^{\wedge
n}})$ has a Hom-vector space structure for all $n\geq1$ (note that
$H_{n}^{\alpha}(L,M)$ is defined in \cite{Y} to be the vector
space $\ker(d_n)/\Im(d_{n-1})$, while we here take it as a
Hom-vector space). In the special case, if
$(M,\alpha_M)=(\F,id_{\F})$ is a trivial Hom-$L$-module, then
$H_n^\alpha(L,\F)$ is said to the {\it $n$-th homology} of
$(L,\alpha_L)$ and denoted by $H_n^\alpha(L)$. It is easily
checked that there is an isomorphism of Hom-vector spaces
$H_1^\alpha(L)\cong (L^{ab},\alpha_{L^{ab}})$. Also, it is
established in \cite[Theorem 3.14]{CG1} that
$H^{\alpha}_2(L)\cong{\cal M}(L,\alpha_L)$ for any perfect Hom-Lie
algebra $(L,\alpha_L)$. In Section 4, we extend this result for
any arbitrary Hom-Lie algebra.
\section{The tensor and the exterior products of Hom-Lie algebras}
This section is devoted to the study of the properties of the
(non-abelian) tensor and exterior products of Hom-Lie algebras. We
begin by recalling these concepts.

Let $\partial_1:(M,\alpha_M)\lo(L,\alpha_L)$ and
$\partial_2:(N,\alpha_N)\lo(L,\alpha_L)$ be two crossed modules of
Hom-Lie algebras. There are Hom-actions of $(M,\alpha_M)$ on
$(N,\alpha_N)$ and of $(N,\alpha_N)$ on $(M,\alpha_M)$ given by
$~^mn=\hspace{-.17cm}~^{\partial_1(m)}n$ and $~^nm=
\hspace{-.17cm}~^{\partial_2(n)}m$. We take $(M,\alpha_M)$ (and
$(N,\alpha_N)$) to act on itself by the product. Then the {\it
$($Hom-Lie$)$ tensor product} $(M\star N,\alpha_{\star})$ is
defined in \cite{CKR} as the Hom-Lie algebra generated by the
symbols $m\star n$ (for $m\in M, n\in M$) subject to the
relations\vspace{-.18cm}
\begin{alignat*}{2}
&(A1)~~c(m\star n)=cm\star n=m\star cn,~~~~~
&&(A4)~~[m,m']\star\alpha_N(n)=\alpha_M(m)\star(^{m'}n)-\alpha_M(m')\star(^mn),\\[-.2cm]
&(A2)~~(m+m')\star n=m\star n+m'\star n,~~~~~
&&(A5)~~\alpha_M(m)\star[n,n']=(^{n'}m)\star\alpha_N(n)-(^nm)\star\alpha_N(n'),\\[-.2cm]
&(A3)~~m\star(n+n')=m\star n+m\star n',~~~~~
&&(A6)~~[(m\star n),(m'\star n')]=-(^nm)\star(^{m'}n'),
\end{alignat*}

\vspace{-.2cm}\hspace{-.57cm}for all $c\in\F$, $m,m'\in M$,
$n,n'\in N$, and the endomorphism $\alpha_{\star}$ is given on
generators by\vspace{-.18cm}
\[\alpha_{\star}(m\star
n)=\alpha_M(m)\star\alpha_N(n).\vspace{-.18cm}\] Note that the
identity homomorphism $id_L:(L,\alpha_L)\lo (L,\alpha_L)$ is a
crossed module with $(L,\alpha_L)$ acting on itself by the
product, so we can always form the tensor products $(L\star
M,\alpha_{\star})$, $(L\star N,\alpha_{\star})$ and $(L\star
L,\alpha_{\star})$. Also, if $\alpha_M=id_M$, $\alpha_N=id_N$ and
$\alpha_L=id_L$, then $M\star N$ coincides with the tensor product
of Lie algebras given in \cite{E1}.

The following proposition gives some useful information on Hom-Lie
tensor products, the proof of which are left to the reader (see
also \cite{CKR}).
\begin{proposition}
With the above assumptions and notations, we have $:$

$(i)$ The maps\vspace{-.4cm}
\begin{alignat*}{1}
\lambda&:(M\star N,\alpha_{\star})\lo(L,\alpha_L)~~~~~~
,~~~~~m\star
n\longmapsto[\partial_1(m),\partial_2(n)]\\[-.17cm]
\lambda_M&:(M\star
N,\alpha_{\star})\lo(M,\alpha_M)~~~~,~~~~~m\star n\longmapsto
-\hspace{-.1cm}~^nm\\[-.17cm]
\lambda_N&:(M\star
N,\alpha_{\star})\lo(N,\alpha_N)~~~~~,~~~~~m\star
n\longmapsto\hspace{-.1cm}~^mn
\end{alignat*}
are homomorphisms of Hom-Lie algebras with the kernels contained
in the center of $(M\star N,\alpha_{\star})$.

$(ii)$ There is a Hom-action of $(L,\alpha_L)$ on $(M\star
N,\alpha_{\star})$ given by\vspace{-.22cm}
\[~^l(m\star n)=\hspace{-.1cm}~^lm\star\alpha_N(n)+\alpha_M(m)
\star\hspace{-.1cm}~^ln,\vspace{-.22cm}\]and then $(M,\alpha_M)$
and $(N,\alpha_N)$ act on $(M\star N,\alpha_{\star})$ via
$\partial_1$ and $\partial_2$. Moreover,\vspace{-.2cm}
\begin{equation}
\lambda(\hspace{-.1cm}~^lx)=[\alpha_L(l),\lambda(x)]~~~~~ {\rm
and}~~~~~\hspace{-.1cm}~^{\lambda(x)}(x')=[\alpha_{\star}(x),x']\vspace{-.2cm}
\end{equation}
for all $x,x'\in M\star N$, $l\in L$, and the relations similar to
$(1)$ are valid for $\lambda_M$ and $\lambda_N$.

$(iii)$ If $(M,\alpha_M)$ and $(N,\alpha_N)$ act trivially on each
other and both maps $\alpha_M$ and $\alpha_N$ are surjective, then
there is an isomorphism of abelian Hom-Lie algebras $(M\star
N,\alpha_{\star})\cong(M^{ab}\otimes N^{ab},\alpha_{\otimes})$,
where $\alpha_{\otimes}$ is induced by $\alpha_M$ and $\alpha_N$.

$(iv)$ Let
$(M,\alpha_M)\rightarrowtail(K,\alpha_K)\twoheadrightarrow(L,\alpha_L)$
and
$(M',\alpha_{M'})\rightarrowtail(K',\alpha_{K'})\twoheadrightarrow(L',\alpha_{L'})$
be short exact sequences of Hom-Lie algebras, where
$(M',\alpha_{M'})$ and $(K',\alpha_{K'})$ are ideals of
$(K,\alpha_K)$, and $(L',\alpha_{L'})$ is an ideal of
$(L,\alpha_L)$. Then there exists an exact sequence of Hom-Lie
algebras\vspace{-.2cm}
\[((M\star K')\rtimes(K\star M'),\alpha_{\rtimes})\lo(K\star K',\alpha_{\star})
\twoheadrightarrow(L\star L',\alpha_{\star}),\vspace{-.2cm}\]
where the Hom-action of $(K\star M',\alpha_{\star})$ on $(M\star
K',\alpha_{\star})$ is induced by the homomorphism
$\lambda_{M'}:(K\star M',\alpha_{\star})\lo(M',\alpha_{M'})$.
\end{proposition}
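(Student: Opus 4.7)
The plan is to handle the four parts in order, exploiting the universal definition of $(M\star N,\alpha_\star)$ by generators and relations (A1)--(A6): a linear map on generators extends to a homomorphism if and only if it annihilates each relation.

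For part (i), I first verify well-definedness. Relations (A1)--(A3) amount to bilinearity and are immediate. For $\lambda$, (A4) reduces to the crossed-module identity $\partial_1([m,m'])=[\partial_1(m),\partial_1(m')]$ together with equivariance of $\partial_i$ and Hom-Jacobi in $(L,\alpha_L)$; (A5) is symmetric; (A6) follows from expanding $[[\partial_1(m),\partial_2(n)],[\partial_1(m'),\partial_2(n')]]$ and using the crossed-module/action compatibility. The checks for $\lambda_M$ and $\lambda_N$ use the crossed-module property $\partial_i(\hspace{-.1cm}~^lx)=[l,\partial_i(x)]$ and the Hom-action axioms (a)--(c). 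Compatibility with $\alpha_\star$ is immediate on generators. For the centrality claim, relation (A6) extended bilinearly yields the identity $[x,x']=\lambda_M(x)\star\lambda_N(x')$ for all $x,x'\in M\star N$, which directly gives $\ker\lambda_M,\ker\lambda_N\subseteq Z(M\star N)$. For $\ker\lambda$ I use the factorisations $\lambda=\partial_1\circ\lambda_M=\partial_2\circ\lambda_N$, so $\lambda_M(\ker\lambda)\subseteq\ker\partial_1\subseteq Z(M)$ and $\lambda_N(\ker\lambda)\subseteq\ker\partial_2\subseteq Z(N)$; plugging into the bracket formula above and combining with (A4)--(A5) yields $[x,x']=0$ whenever $x\in\ker\lambda$.

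For part (ii), I first verify that ${}^l(m\star n)=\hspace{-.1cm}~^lm\star\alpha_N(n)+\alpha_M(m)\star\hspace{-.1cm}~^ln$ descends to a well-defined Hom-action by checking that it preserves (A1)--(A6); the action axioms (a)--(c) then follow from Hom-Jacobi in $L$ together with the compatibility axioms of the $L$-actions on $M$ and $N$. Relation (1) is then a generator-level calculation: on $x=m\star n$, the crossed-module axiom $\partial_i(\hspace{-.1cm}~^lx)=[\alpha_L(l),\partial_i(x)]$ combined with Hom-Jacobi gives $\lambda(\hspace{-.1cm}~^lx)=[\alpha_L(l),\lambda(x)]$, while the second half of (1) follows directly from (A6).

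For part (iii), trivial mutual actions collapse (A4)--(A6) to $[m,m']\star\alpha_N(n)=0$, $\alpha_M(m)\star[n,n']=0$, and $[m\star n,m'\star n']=0$; surjectivity of $\alpha_M$ and $\alpha_N$ lets me cancel the outer endomorphisms, so $(M\star N,\alpha_\star)$ is abelian and $\star$ factors through the abelianisations, giving a canonical surjection $M^{ab}\otimes N^{ab}\twoheadrightarrow M\star N$. Conversely, the bilinear map $M\times N\to M^{ab}\otimes N^{ab}$ trivially satisfies all six relations in this setting and respects the induced endomorphism $\alpha_\otimes$, supplying the inverse.

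For part (iv), the surjection $K\star K'\twoheadrightarrow L\star L'$ is clear on generators, and its kernel is generated, as an ideal under the induced Hom-actions, by the mixed generators $m\star k'$ and $k\star m'$ for $m\in M$, $m'\in M'$, $k\in K$, $k'\in K'$. I would define the map $(M\star K')\rtimes(K\star M')\to K\star K'$ on generators by $(m\star k',k\star m')\mapsto m\star k'+k\star m'$, using part (ii) to supply the required Hom-action of $(K\star M',\alpha_\star)$ on $(M\star K',\alpha_\star)$ via $\lambda_{M'}$. The main obstacle, and the least routine step, is checking that the semidirect-product bracket on the left matches the internal brackets in $K\star K'$ among the mixed generators; this is an Ellis-style calculation lifted to the Hom setting by carefully tracking the occurrences of $\alpha_M$, $\alpha_N$, $\alpha_K$, $\alpha_{K'}$ in each of the relations (A4)--(A6).
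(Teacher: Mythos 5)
The paper offers no proof of this proposition at all (it is explicitly ``left to the reader,'' with a pointer to \cite{CKR}), so there is nothing to compare line by line; your overall strategy --- extend maps from the generators using the presentation (A1)--(A6) and verify each relation --- is certainly the intended one. Your treatment of part (iii), the well-definedness checks in (i)--(ii), and the centrality of $\ker\lambda_M$ and $\ker\lambda_N$ via the bilinear identity $[x,x']=\lambda_M(x)\star\lambda_N(x')$ are all sound.

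There is, however, a genuine gap in your argument that $\ker\lambda\subseteq Z(M\star N)$. From $x\in\ker\lambda$ you get $a:=\lambda_M(x)\in\ker\partial_1\subseteq Z(M)$, and you claim that substituting this into $[x,x']=a\star\lambda_N(x')$ and ``combining with (A4)--(A5)'' forces $[x,x']=0$. But (A4)--(A5) only manipulate symbols whose entries sit in the right slots ($\alpha$-images, brackets, or acted-upon elements): taking $m=a$ in (A4) gives $0=[a,m']\star\alpha_N(n)=\alpha_M(a)\star{}^{m'}n-\alpha_M(m')\star{}^{a}n=\alpha_M(a)\star{}^{m'}n$, since ${}^{a}n={}^{\partial_1(a)}n=0$. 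This yields exactly $[\alpha_{\star}(x),x']=0$, i.e.\ the twisted identity already recorded in $(1)$, not $[x,x']=0$. This twist is precisely where the Hom case differs from the Lie case, where ${}^{\lambda(x)}x'=[x,x']$ makes centrality of $\ker\lambda$ immediate; in the Hom setting your manipulation only shows that $\alpha_{\star}(\ker\lambda)$ is central and that $[\ker\lambda,M\star N]\subseteq\ker\lambda_M\cap\ker\lambda_N$, so the untwisted statement needs a different argument (or the one in \cite{CKR}); note it is automatic in the case the paper actually uses later, $\partial_1=\partial_2=\mathrm{id}_L$, where $\lambda,\lambda_M,\lambda_N$ coincide up to sign. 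Two smaller points: the second identity in $(1)$ does not ``follow directly from (A6)'' --- on generators (A6) gives $[\alpha_{\star}(x),x']=-{}^{\alpha_N(n)}\alpha_M(m)\star{}^{m'}n'$, and matching this with the two-term expression for ${}^{\lambda(x)}x'$ requires (A4)/(A5) together with the action axioms $(a)$--$(c)$; and in (iv) the assertion that the kernel of $K\star K'\twoheadrightarrow L\star L'$ is the ideal generated by the mixed generators is itself the substance of middle exactness (proved by building a well-defined inverse on the quotient), which your outline states rather than proves, in addition to the semidirect-product bracket check you already flag as outstanding.
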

In view of part $(i)$ of the above proposition, for any Hom-Lie
algebra $(L,\alpha_L)$, the Hom-map $\lambda_L:(L\star
L,\alpha_{\star})\lo(L,\alpha_L)$, $l_l\star
l_2\longmapsto[l_1,l_2]$, is a homomorphism (which is called the
{\it commutator Hom-map}). We define
$J_2^{\alpha}(L)=(\ker\lambda_L,\alpha_{\star}|)$.\vspace{.2cm}

Let $(M\square N,\alpha_{\square})$ be the Hom-vector subspace of
$(M\star N,\alpha_{\star})$, where $M\square N$ is the vector
subspace of $M\star N$ spanned by the elements of form $m\star n$
with $\partial_1(m)=\partial_2(n)$ and $\alpha_{\square}$ is the
restriction of $\alpha_{\star}$ to $M\square N$. Then $M\square
N\subseteq Z(M\star N)$ and so $(M\square N,\alpha_{\square})$ is
an ideal of $(M\star N,\alpha_{\star})$. Following \cite{CG1}, the
{\it $($Hom-Lie$)$ exterior product} $(M\cw N,\alpha_{\cw})$ is
defined to be the quotient $((M\star N)/(M\square
N),\bar\alpha_{\star})$. We write $m\cw n$ to denote the image in
$M\cw N$ of the generator $m\star n$.

It is readily seen that the parts $(i)$ and $(ii)$ of Proposition
3.1 hold with $\star$ replaced by $\cw$. We can also conclude from
Proposition 3.1$(iv)$ that every short exact sequence $e:
(M,\alpha_M)\rightarrowtail(K,\alpha_K)\twoheadrightarrow(L,\alpha_L)$
of Hom-Lie algebras induces an exact sequence\vspace{-.2cm}
\begin{equation}
(M\cw K,\alpha_{\cw})\lo(K\cw K,\alpha_{\cw})
\twoheadrightarrow(L\cw L,\alpha_{\cw}).\vspace{-.2cm}
\end{equation}
Note that the functorial homomorphisms $(K\cw
M,\alpha_{\cw})\lo(K\cw K,\alpha_{\cw})$ and $(M\cw
K,\alpha_{\cw})\lo(K\cw K,\alpha_{\cw})$ have the same images. We
can say more if the extension $e$ is split.
\begin{lemma}
A split extension
$(M,\alpha_M)\rightarrowtail(K,\alpha_K)\twoheadrightarrow(L,\alpha_L)$
of Hom-Lie algebras induces a short exact sequence of Hom-Lie
algebras $ (M\cw K,\alpha_{\cw})\stackrel{i}\rightarrowtail(K\cw
K,\alpha_{\cw}) \twoheadrightarrow(L\cw L,\alpha_{\cw})$.
\end{lemma}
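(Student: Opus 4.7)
\emph{Proof plan.} Since the sequence splits, fix a section $\eta:(L,\alpha_L)\to(K,\alpha_K)$ with $\zeta\circ\eta=id_L$ and set $q:=\eta\circ\zeta:K\to K$. Then $q$ is an idempotent Hom-Lie endomorphism of $(K,\alpha_K)$ with image $T:=\eta(L)\cong(L,\alpha_L)$ and kernel $M$, so the complementary $\F$-linear map $r:=id_K-q$ retracts $K$ onto $M$ along $T$; note that $r$ fails to be a Hom-Lie homomorphism in general but commutes with $\alpha_K$. Thus $K=M\,\dot{+}\,T$, and since $M$ is an ideal of $K$, the cross-brackets $[r(k),r(k')],\,[r(k),q(k')],\,[q(k),r(k')]$ all lie in $M$, while $[q(k),q(k')]$ lies in $T$, for any $k,k'\in K$.

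In view of the exact sequence~(2), it suffices to show that the canonical Hom-Lie homomorphism $\theta:(M\cw K,\alpha_{\cw})\to(K\cw K,\alpha_{\cw})$ is injective; and since this is a property of the underlying Hom-vector spaces, it is enough to produce a retraction $\phi$ of $\theta$ in the category of Hom-vector spaces (no Hom-Lie structure on $\phi$ is required). Define $\tilde\phi:K\times K\to M\cw K$ by
\[
\tilde\phi(k_1,k_2):=r(k_1)\cw k_2-r(k_2)\cw q(k_1),
\]
which takes values in $M\cw K$ because $r(k_i)\in M$. This $\tilde\phi$ is $\F$-bilinear, commutes with the $\alpha$-maps (because $r$ and $q$ do), and satisfies $\tilde\phi(k,k)=r(k)\cw k-r(k)\cw q(k)=r(k)\cw r(k)=0$ in $M\cw K$. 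The substantive check is that $\tilde\phi$ respects the tensor-type relations (A4) and (A5) of $K\cw K$: after expanding each $k_i=r(k_i)+q(k_i)$ and splitting into the eight cases according as each argument lies in $M$ or in $T$, each resulting identity reduces to a single application of (A4) or (A5) of $M\cw K$ together with the antisymmetry $m\cw m'+m'\cw m=0$ for $m,m'\in M$. Therefore $\tilde\phi$ descends to a homomorphism of Hom-vector spaces $\phi:(K\cw K,\alpha_{\cw})\to(M\cw K,\alpha_{\cw})$.

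A direct computation on generators now gives $\phi(\theta(m\cw k))=\tilde\phi(m,k)=r(m)\cw k-r(k)\cw q(m)=m\cw k$, since $r(m)=m$ and $q(m)=0$ for $m\in M$. Hence $\phi\circ\theta=id_{M\cw K}$, which forces $\theta$ to be injective and yields the required short exact sequence. The main obstacle is precisely the verification of (A4) and (A5) for $\tilde\phi$: the case-by-case expansion produces mixed cross-terms such as $[r(k_i),q(k_j)]\cw\alpha_K(k_l)$ and $\alpha_M(m)\cw[t,r(k_j)]$ which do not directly match the shape of (A4) in $M\cw K$, and these have to be rearranged via repeated use of (A5) of $M\cw K$ and the antisymmetry among $M$-factors before the identities close up.
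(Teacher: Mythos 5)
Your retraction is, on generators, exactly the first coordinate of the map the paper itself uses: writing $K\cong M\rtimes L$ via the splitting, the paper's $\theta:(K\cw K,\alpha_{\cw})\to((M\cw K)\rtimes(L\cw L),\alpha_{\rtimes})$ sends $(m_1,l_1)\cw(m_2,l_2)$ to $(m_1\cw(m_2,l_2)-m_2\cw(0,l_1),\,l_1\cw l_2)$, and your $\tilde\phi(k_1,k_2)=r(k_1)\cw k_2-r(k_2)\cw q(k_1)$ is precisely $\theta$ followed by the linear projection onto the first summand. The genuine gap is the step ``therefore $\tilde\phi$ descends to a homomorphism of Hom-vector spaces $\phi$ on $K\cw K$.'' The exterior product is not presented, as a vector space, by the symbols $k_1\cw k_2$ modulo the linear relations (A1)--(A5) and the diagonal relation: $(K\star K,\alpha_{\star})$ is by definition a Hom-Lie algebra generated by those symbols subject to (A1)--(A6), i.e.\ a quotient of the free Hom-Lie algebra on the symbols by the ideal generated by the relators, and it is only (A6) that makes the generators span it. A map prescribed on generators is therefore known to be well defined only when it is a homomorphism of Hom-Lie algebras into its target, so that killing the relators kills the whole ideal. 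For a merely linear $\phi$ --- and yours cannot be a homomorphism, being the projection off a semidirect product with nontrivial action --- vanishing on the listed relators does not by itself force vanishing on all elements of the ideal that lie in the span of the generators (for instance the linear consequences obtained by bracketing a relator with a generator and rewriting via (A6), and their iterates); your eight-case check of (A4) and (A5) never addresses these, and no finite list of checks of this kind is provided by the definition.

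The repair is exactly the paper's device and costs little: keep the second coordinate $l_1\cw l_2$, equip $(M\cw K,\alpha_{\cw})$ with the Hom-action of $(L\cw L,\alpha_{\cw})$ given by $^{x}(m\cw k)=[\lambda_L(x),m]\cw\alpha_K(k)+\alpha_M(m)\cw[\lambda_L(x),k]$, and verify that the combined map $\theta$ preserves \emph{all} defining relations, including (A6) and compatibility with $\alpha_{\cw}$, as a homomorphism into $((M\cw K)\rtimes(L\cw L),\alpha_{\rtimes})$. Then $\theta\circ i$ is the canonical inclusion, so $i$ is injective, and your $\phi$ is recovered, well defined, as a byproduct. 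Short of that, you would need to prove a vector-space presentation of $K\cw K$ by the linearized relations, a nontrivial claim your plan neither states nor establishes.
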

\begin{proof}
We only require to prove that the homomorphism $i$ is injective.
To do this, it suffices to construct a homomorphism of Hom-Lie
algebras $\theta:(K\cw K,\alpha_{\cw})\lo((M\cw K)\rtimes(L\cw
L),\alpha_{\rtimes})$ such that the composite map $\theta oi$ is
the canonical inclusion. Here the Hom-action of $(L\cw
L,\alpha_\cw)$ on $(M\cw K,\alpha_\cw)$ is defined as follows:
\vspace{-.2cm}
\[~^x(m\cw k)=[\lambda_L(x),m]\cw\alpha_K(k)+\alpha_M(m)
\cw[\lambda_L(x),k],\vspace{-.1cm}\] for all $x\in L\cw L$, $m\in
M$, $k\in K$, in which $\lambda_L:(L\cw
L,\alpha_{\cw})\lo(L,\alpha_L)$, $(l_1\cw
l_2)\longmapsto[l_1,l_2]$, is the commutator Hom-map and
$(L,\alpha_L)$ is considered as a subalgebra of $(K,\alpha_K)$.
Since $(K,\alpha_K)\cong(M\rtimes L,\alpha_{\rtimes})$, we can
define\vspace{-.2cm}
\begin{alignat*}{1}
\theta:((M\rtimes L)\cw(M\rtimes L),\alpha_{\cw})&\lo((M\cw(M\rtimes L))
\rtimes(L\cw L),\alpha_{\rtimes}).\\
(m_1,l_1)\cw(m_2,l_2)&\longmapsto(m_1\cw(m_2,l_2)-m_2\cw(0,l_1),l_1\cw
l_2)
\end{alignat*}

\vspace{-.2cm}\hspace{-.57cm}It is routine to check that $\theta$
preserves the defining relations of the Hom-Lie exterior product
with $\theta\circ\alpha_{\cw}=\alpha_{\rtimes}\circ\theta$, and is
therefore the required homomorphism.
\end{proof}
To show how the exterior product is related to universal central
extensions of Hom-Lie algebras, we need the following
\begin{lemma}
If $e: (M,\alpha_M)\stackrel{\iota}\rightarrowtail(K,\alpha_K)
\stackrel{\phi}\twoheadrightarrow(L,\alpha_L)$ is a central
extension of Hom-Lie algebras, then there exists a homomorphism of
Hom-Lie algebras $\psi:(L\cw L,\alpha_{\cw})\lo(K,\alpha_K)$ such
that $\phi\circ\psi=id_L\circ\lambda_L$. Moreover, if
$(L,\alpha_L)$ is perfect then $\psi$ is unique.
\end{lemma}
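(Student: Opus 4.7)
The plan is to define $\psi$ on generators and then verify that it respects all the defining relations of the exterior product as well as the endomorphism $\alpha_\cw$. Concretely, for each pair $l_1,l_2\in L$ pick lifts $k_1,k_2\in K$ with $\phi(k_i)=l_i$, and set $\psi(l_1\cw l_2)=[k_1,k_2]$. The first thing I would check is \emph{independence of the lifts}: a different lift differs by an element of $M$, and since $M\subseteq Z(K)$ the bracket is unchanged. In particular the defining relation $l\cw l=0$ is respected, because $[k,k]=0$ by skew-symmetry.

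Next I would verify the relations A1--A6 of the tensor/exterior product, specialised to the crossed module $id_L:(L,\alpha_L)\to(L,\alpha_L)$ in which $L$ acts on itself by the bracket. The bilinearity relations A1--A3 are immediate. The Hom-version A4 reduces, on choosing lifts, to
\[[[k_1,k_1'],\alpha_K(k_2)]=[\alpha_K(k_1),[k_1',k_2]]-[\alpha_K(k_1'),[k_1,k_2]],\]
which is exactly the Hom-Jacobi identity in $(K,\alpha_K)$; A5 is symmetric. For A6 one computes
\[\psi\bigl([l_1\cw l_2,l_1'\cw l_2']\bigr)=\psi\bigl([l_1,l_2]\cw[l_1',l_2']\bigr)=[[k_1,k_2],[k_1',k_2']],\]
which agrees with $[\psi(l_1\cw l_2),\psi(l_1'\cw l_2')]$, and this simultaneously establishes that $\psi$ is a Hom-Lie homomorphism. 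Compatibility with the endomorphisms, $\psi\circ\alpha_\cw=\alpha_K\circ\psi$, follows by taking $\alpha_K(k_i)$ as a lift of $\alpha_L(l_i)$ (using $\phi\circ\alpha_K=\alpha_L\circ\phi$) together with the multiplicativity $\alpha_K([k_1,k_2])=[\alpha_K(k_1),\alpha_K(k_2)]$. The identity $\phi\circ\psi=\lambda_L$ is immediate from the definition, since $\phi[k_1,k_2]=[l_1,l_2]$.

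For the uniqueness claim, suppose $\psi_1,\psi_2:(L\cw L,\alpha_\cw)\to(K,\alpha_K)$ both satisfy $\phi\circ\psi_i=\lambda_L$. Then $\delta:=\psi_1-\psi_2$ is a linear map with image in $M\subseteq Z(K)$. For any $x,y\in L\cw L$, expanding
\[[\psi_1(x),\psi_1(y)]=[\psi_2(x)+\delta(x),\psi_2(y)+\delta(y)]\]
and using centrality of the $\delta$-terms gives $\psi_1([x,y])=\psi_2([x,y])$, so $\delta$ vanishes on $[L\cw L,L\cw L]$. It then remains to observe that $(L\cw L,\alpha_\cw)$ is perfect whenever $(L,\alpha_L)$ is: writing any $l_1,l_2$ as sums of brackets and applying relation A6 in the form $[l_1,l_2]\cw[l_1',l_2']=[l_1\cw l_2,\,l_1'\cw l_2']$ exhibits every generator of $L\cw L$ as a bracket. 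Hence $\delta=0$ and $\psi_1=\psi_2$.

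The main delicate point I expect is the verification of the Hom-relations A4--A5, since these are precisely where the twisting by $\alpha_L$ interacts with the bracket; they must be matched against the Hom-Jacobi identity in $(K,\alpha_K)$ rather than the ordinary Jacobi identity. Everything else is either bookkeeping in the presentation of $L\cw L$ or an application of the fact that $M$ is central in $K$.
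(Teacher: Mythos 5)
Your proposal is correct and follows essentially the same route as the paper: define $\psi$ on generators by $\psi(l_1\cw l_2)=[k_1,k_2]$ for lifts $k_i$, use centrality of $M$ in $K$ to verify well-definedness, the relations (A1)--(A6) (via the Hom-Jacobi identity) and $\psi\circ\alpha_{\cw}=\alpha_K\circ\psi$, and obtain uniqueness by noting that the difference of two such homomorphisms lands in the central kernel, vanishes on $[L\cw L,L\cw L]$, and that $(L\cw L,\alpha_{\cw})$ is perfect by (A6) when $(L,\alpha_L)$ is. You merely spell out the verifications that the paper leaves implicit.
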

\begin{proof}
We define the Hom-map $\psi$ on generators by $\psi(l_1\cw
l_2)=[k_1,k_2]$, where $\phi(k_i)=l_i$, $i=1,2$. Due to the
centrality of the extension $e$, $\psi$ preserves the relations of
the exterior product with $\psi
\circ\alpha_{\cw}=\alpha_K\circ\psi$, and is thus the required
homomorphism. If $\psi,\psi':(L\cw L,\alpha_{\cw
})\lo(K,\alpha_K)$ are two homomorphisms with $\phi\circ\psi=\phi
\circ\psi'$, then $\psi-\psi'=\iota\circ\eta$, where $\eta:(L\cw
L,\alpha_{\cw})\lo(M,\alpha_M)$ is a homomorphism such that $[L\cw
L,L\cw L]$ is contained in $\ker\eta$. By the relation (A6), if
$(L,\alpha_L)$ is perfect, then so is $(L\cw L,\alpha_{\cw })$,
forcing the uniqueness of $\psi$, as desired.
\end{proof}
The above lemma, together with \cite[Theorem 3.4]{CKR}, leads us
to the following result.
\begin{proposition}
For any perfect Hom-Lie algebra $(L,\alpha_L)$, the
extension\vspace{-.18cm}
\[(\ker(\lambda_L),\alpha_{\cw }|)\rightarrowtail(L\cw
L,\alpha_{\cw
})\stackrel{\lambda_L}\twoheadrightarrow(L,\alpha_L)\vspace{-.18cm}\]
is the universal central extension of $(L,\alpha_L)$ and so, there
is an isomorphism of Hom-Lie algebras $(L\cw
L,\alpha_{\cw})\cong(L\star L,\alpha_{\star})$. In particular,
$H_2^{\alpha}(L)\cong(\ker(\lambda_L),\alpha_\cw|)$.
\end{proposition}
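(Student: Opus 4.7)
The plan is to verify that the displayed sequence $(\ker(\lambda_L),\alpha_\cw|)\rightarrowtail(L\cw L,\alpha_\cw)\stackrel{\lambda_L}\twoheadrightarrow(L,\alpha_L)$ is a universal central extension of $(L,\alpha_L)$, and then to read off the two corollaries from this identification via Theorem 3.4 of \cite{CKR} and the Schur-multiplier description of $H_2^\alpha$ from \cite{CG1}.

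Centrality of the extension follows from the analogue of Proposition 3.1(i) for the exterior product (valid because $M\square N\subseteq Z(M\star N)$ passes to the quotient), which yields $\ker(\lambda_L)\subseteq Z(L\cw L)$. Next I would show that $(L\cw L,\alpha_\cw)$ is itself perfect: relation (A6) descends to the exterior product as $[l_1\cw l_2,l_3\cw l_4]=[l_1,l_2]\cw[l_3,l_4]$, so writing each $l_i$ as a sum of brackets via $L=[L,L]$ and expanding bilinearly exhibits every generator $l_1\cw l_2$ as a Higgins commutator in $L\cw L$. (This is exactly the remark already invoked in the proof of Lemma 3.3.) With centrality and perfectness of $L\cw L$ in hand, Lemma 3.3 applied to an arbitrary central extension $(M,\alpha_M)\rightarrowtail(K,\alpha_K)\stackrel{\phi}\twoheadrightarrow(L,\alpha_L)$ supplies a homomorphism $\psi:(L\cw L,\alpha_\cw)\to(K,\alpha_K)$ with $\phi\circ\psi=\lambda_L$, and the uniqueness clause of that lemma (which activates precisely because $L$ is perfect) delivers the uniqueness of the lift. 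This certifies the displayed sequence as a universal central extension.

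The two remaining statements are then formal. By Theorem 3.4 of \cite{CKR}, $(L\star L,\alpha_\star)\stackrel{\lambda_L}\twoheadrightarrow(L,\alpha_L)$ is likewise a universal central extension of perfect $(L,\alpha_L)$, so uniqueness up to unique isomorphism of universal central extensions yields $(L\cw L,\alpha_\cw)\cong(L\star L,\alpha_\star)$. For the homology assertion, the kernel of any universal central extension of a perfect Hom-Lie algebra coincides with its Schur multiplier $\mathcal{M}(L,\alpha_L)$, and Theorem 3.14 of \cite{CG1} identifies $\mathcal{M}(L,\alpha_L)\cong H_2^\alpha(L)$ in the perfect case, giving $H_2^\alpha(L)\cong(\ker(\lambda_L),\alpha_\cw|)$.

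The main subtlety I anticipate is the perfectness step for $L\cw L$: the Hom-twisting by $\alpha$ could in principle obstruct rewriting $l_1\cw l_2$ directly as a commutator, but since (A6) descends to $\cw$ in the clean untwisted form above, the argument reduces to the familiar Lie-algebra manipulation, and no hidden use of $\alpha$ being surjective is needed.
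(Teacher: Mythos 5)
Your proposal is correct and follows essentially the same route as the paper, which derives the result from Lemma 3.3 (existence of the lift to any central extension, with uniqueness forced by perfectness of $L\cw L$ via relation (A6)) combined with Theorem 3.4 of \cite{CKR} and the identification $H_2^{\alpha}(L)\cong{\cal M}(L,\alpha_L)$ for perfect $(L,\alpha_L)$ from \cite{CG1}. You merely spell out the centrality and perfectness checks that the paper leaves implicit, so there is no substantive difference.
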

We end this section by analyzing the kernel of the natural
homomorphism $(M\star N,\alpha_{\star})\twoheadrightarrow(M\cw
N,\alpha_{\cw})$. To do this, we require the following definition,
which is a generalized version of Whitehead's quadratic functor in
the context of Hom-vector spaces.\\
{\large{\bf Definition}}. The {\it universal quadratic Hom-functor
$\Gamma$} is defined for any Hom-vector space $(A,\alpha_A)$ to be
the Hom-vector space $(\Gamma(A),\alpha_{\Gamma})$, where
$\Gamma(A)$ is the vector space generated by the symbols
$\gamma(a)$ with $a\in A$, subject to the relations\vspace{-.4cm}
\begin{alignat*}{1}
\lambda^2\gamma(a)&=~\gamma(\lambda a),\\
\gamma(\lambda
a+b)+\lambda\gamma(a)+\lambda\gamma(b)&=~\lambda\gamma(a+b)
+\gamma(\lambda a)+\gamma(b),\\
\gamma(a+b+c)+\gamma(a)+\gamma(b)+\gamma(c)
&=\gamma(a+b)+\gamma(a+c)+\gamma(b+c),
\end{alignat*}
for all $\lambda\in\F$, $a,b,c\in A$, and the linear map
$\alpha_{\Gamma}:\Gamma(A)\lo\Gamma(A)$ is given by
$\alpha_{\Gamma}(\gamma(a))=\gamma(\alpha_A(a))$.\vspace{.2cm}

Under the assumptions at the beginning of this section, we
set\vspace{-.2cm}
\[M\times_LN=\{(m,n)\in M\oplus
N~|~\partial_1(m)=\partial_2(n)\}~~{\rm and}~~\langle
M,N\rangle=\{(\lambda_M(x),\lambda_N(x))~|~x\in M\star
N\}.\vspace{-.2cm}\] Then $(M\times_LN,\alpha_{\oplus})$ is a
subalgebra of the direct sum $(M\oplus N,\alpha_{\oplus})$,
$(\langle M,N\rangle,\alpha_{\oplus}|)$ is an ideal of
$(M\times_LN,\alpha_{\oplus})$, and the quotient
$(M\times_LN/\langle M,N\rangle,\bar\alpha_{\oplus})$ is abelian.
By arguments similar to those used in \cite[Proposition 14]{E1},
we can obtain the following natural exact sequence of Hom-Lie
algebras\vspace{-.07cm}
\[(\Gamma(\f{M\times_LN}{\langle
M,N\rangle}),\alpha_{\Gamma})\stackrel{\psi}\lo(M\star
N,\alpha_{\star})\twoheadrightarrow(M\cw
N,\alpha_{\cw}),\vspace{-.07cm}\]where
$\psi(\gamma(\overline{(m,n)}))=m\star n$ for $\overline{(m,n)}$
the coset of $\langle M,N\rangle$ represented by $(m,n)\in
M\times_LN$. In particular, if $(M,\alpha_M)$ and $(N,\alpha_N)$
are ideals of a Hom-Lie algebra, then there is an exact
sequence\vspace{-.07cm}
\begin{equation}
(\Gamma(\f{M\cap
N}{[M,N]}),\alpha_{\Gamma})\stackrel{\psi}\lo(M\star
N,\alpha_{\star})\twoheadrightarrow(M\cw
N,\alpha_{\cw}).\vspace{-.07cm}
\end{equation}
In next result, we show that in the case of
$(M,\alpha_M)=(N,\alpha_N)$, the homomorphism $\psi$ in $(3)$ is
injective.
\begin{proposition}
For any Hom-Lie algebra $(L,\alpha_L)$ with surjective
endomorphism $\alpha_L$, there is an exact sequence of Hom-Lie
algebras
$(\Gamma(L^{ab}),\alpha_{\Gamma})\stackrel{\psi}\rightarrowtail
(L\star L,\alpha_{\star})\twoheadrightarrow(L\cw L,\alpha_{\cw})$.
\end{proposition}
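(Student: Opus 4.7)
Setting $(M,\alpha_M)=(N,\alpha_N)=(L,\alpha_L)$ in the exact sequence $(3)$ (viewing $L$ as an ideal of itself) identifies $M\cap N/[M,N]$ with $L^{ab}$ and already yields exactness at $(L\star L,\alpha_\star)$; hence it remains only to prove that $\psi$ is injective. My plan is to construct a factorisation $f\circ\psi=\iota$ through a clearly injective map $\iota$.

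First I would apply the functor $\star$ to the quotient projection $\pi:(L,\alpha_L)\twoheadrightarrow(L^{ab},\bar\alpha_L)$, obtaining a homomorphism $\pi\star\pi:(L\star L,\alpha_\star)\to(L^{ab}\star L^{ab},\alpha_\star)$, $l_1\star l_2\mapsto\bar l_1\star\bar l_2$. Since $(L^{ab},\bar\alpha_L)$ is abelian (so it acts trivially on itself) and $\bar\alpha_L$ is surjective---this is precisely where the hypothesis that $\alpha_L$ is surjective is used---Proposition 3.1$(iii)$ supplies an isomorphism $(L^{ab}\star L^{ab},\alpha_\star)\cong(L^{ab}\otimes L^{ab},\bar\alpha_L\otimes\bar\alpha_L)$, $\bar l_1\star\bar l_2\mapsto\bar l_1\otimes\bar l_2$. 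Composing, I get a homomorphism of Hom-Lie algebras $f:(L\star L,\alpha_\star)\to(L^{ab}\otimes L^{ab},\bar\alpha_L\otimes\bar\alpha_L)$ landing in an abelian Hom-Lie algebra, given on generators by $f(l_1\star l_2)=\bar l_1\otimes\bar l_2$.

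Next I would introduce the Hom-vector space homomorphism $\iota:(\Gamma(L^{ab}),\alpha_\Gamma)\to(L^{ab}\otimes L^{ab},\bar\alpha_L\otimes\bar\alpha_L)$ defined on generators by $\iota(\gamma(\bar l))=\bar l\otimes\bar l$, and verify by direct expansion that it respects the three defining relations of $\Gamma$. The injectivity of $\iota$ is the technical core of the argument. Fixing a basis $\{e_i\}_{i\in I}$ of $L^{ab}$, I would first use the cross-effect relation defining $\Gamma$ to show that the polarisation $p(a,b):=\gamma(a+b)-\gamma(a)-\gamma(b)$ is symmetric and bilinear; combined with $\gamma(\lambda a)=\lambda^2\gamma(a)$, this yields the identity $\gamma(\sum c_ie_i)=\sum c_i^2\gamma(e_i)+\sum_{i<j}c_ic_jp(e_i,e_j)$, so $\Gamma(L^{ab})$ is spanned by $\{\gamma(e_i)\}_i\cup\{p(e_i,e_j)\}_{i<j}$. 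Since $\iota$ sends this spanning set to $\{e_i\otimes e_i\}_i\cup\{e_i\otimes e_j+e_j\otimes e_i\}_{i<j}$, which is manifestly linearly independent in $L^{ab}\otimes L^{ab}$, any relation in the kernel of $\iota$ collapses to zero.

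Finally, a direct check on generators shows $f\circ\psi=\iota$, so the injectivity of $\iota$ forces that of $\psi$; together with the exactness at $L\star L$ already provided by $(3)$, this delivers the short exact sequence asserted. The main obstacle is the injectivity of $\iota$: once the bilinearity of the polarisation is extracted from the cross-effect relation, the spanning identity above reduces the question to linear independence in the ordinary tensor square, which is routine.
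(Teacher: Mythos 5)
Your proposal is correct and follows essentially the same route as the paper: both reduce via the exact sequence $(3)$ to the injectivity of $\psi$, and both establish it by composing with the natural map $(L\star L,\alpha_{\star})\to(L^{ab}\star L^{ab},\alpha_{\star})\cong(L^{ab}\otimes L^{ab},\alpha_{\otimes})$ (the isomorphism of Proposition 3.1$(iii)$, which is exactly where the surjectivity of $\alpha_L$ is used) and checking that $\Gamma(L^{ab})$ embeds into $L^{ab}\otimes L^{ab}$ via $\gamma(\bar l)\mapsto\bar l\otimes\bar l$. The only difference is that the paper quotes \cite[Proposition 8.6]{ST} for this final linear-independence fact, whereas you prove it directly by the polarisation and spanning-set argument, which is a sound, self-contained substitute.
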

\begin{proof}
In view of the exact sequence $(3)$, it suffices to prove that
$\psi$ is injective. Owing to Proposition 3.1$(iii)$,
$(L^{ab}\star L^{ab},\alpha_{\star})\cong(L^{ab}\otimes
L^{ab},\alpha_{\otimes})$. Taking into account \cite[Proposition
8.6]{ST}, one easily sees that the composite
homomorphism\vspace{-.2cm}
\[\varphi:~(\Gamma(L^{ab}),\alpha_{\Gamma})\stackrel{\psi}\lo
(L\square L,\alpha_{\square})\stackrel{\subseteq}\lo(L\star
L,\alpha_{\star})\stackrel{nat.}\lo(L^{ab}\star
L^{ab},\alpha_{\star})\stackrel{\cong}\lo(L^{ab}\otimes
L^{ab},\alpha_{\otimes}),\vspace{-.2cm}\] maps a basis of
$\Gamma(L^{ab})$ injectively into a set of linearly independent
elements. Therefore $\varphi$ and then $\psi$ are injective.
\end{proof}
We deduce from Proposition $3.5$ the following consequence, which
is used in Section $5$.
\begin{corollary}
With the assumptions of Proposition $3.5$, there is an isomorphism
of Hom-Lie algebras $ \rho: (L\square
L,\alpha_{\square})\lo(L^{ab}\square
L^{ab},\bar\alpha_{\square})$, given by $(x\star
y)\longmapsto(\bar x\star\bar y)$.
\end{corollary}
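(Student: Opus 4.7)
The plan is to apply Proposition 3.5 twice---once to $(L,\alpha_L)$ itself and once to its abelianisation $(L^{ab},\alpha_{L^{ab}})$---and then to read off $\rho$ as the composition of one resulting isomorphism with the inverse of the other.

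First, I would unpack the exact sequence $(3)$ for $(M,\alpha_M)=(N,\alpha_N)=(L,\alpha_L)$: here $M\cap N=L$ and $[M,N]=[L,L]$, so the source of $\psi$ is $(\Gamma(L^{ab}),\alpha_\Gamma)$. By Proposition 3.5 this $\psi$ is injective, and by exactness of $(3)$ at $(L\star L,\alpha_\star)$ its image equals the kernel of the projection onto $(L\cw L,\alpha_\cw)$, which is $(L\square L,\alpha_\square)$ by the very definition of the exterior product. Hence $\psi$ co-restricts to an isomorphism $\psi_L:(\Gamma(L^{ab}),\alpha_\Gamma)\to(L\square L,\alpha_\square)$ that sends the generator $\gamma(\bar l)$ to $l\star l$.

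Next, because $\alpha_L$ is surjective the induced map $\alpha_{L^{ab}}$ is surjective too, so Proposition 3.5 also applies to the abelian Hom-Lie algebra $(L^{ab},\alpha_{L^{ab}})$. Since $(L^{ab})^{ab}=L^{ab}$ and $[L^{ab},L^{ab}]=0$, the same reasoning yields an isomorphism $\psi_{L^{ab}}:(\Gamma(L^{ab}),\alpha_\Gamma)\to(L^{ab}\square L^{ab},\bar\alpha_\square)$ sending $\gamma(\bar l)$ to $\bar l\star\bar l$.

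Finally, the canonical epimorphism $(L\star L,\alpha_\star)\twoheadrightarrow(L^{ab}\star L^{ab},\bar\alpha_\star)$ is functorial in the crossed-module data, and therefore restricts to a Hom-Lie algebra homomorphism $\rho:(L\square L,\alpha_\square)\to(L^{ab}\square L^{ab},\bar\alpha_\square)$ with $l\star l\mapsto\bar l\star\bar l$. Evaluating on the generators $\gamma(\bar l)$ gives the commutative triangle $\rho\circ\psi_L=\psi_{L^{ab}}$, whence $\rho=\psi_{L^{ab}}\circ\psi_L^{-1}$ is an isomorphism. The only step that is not purely formal is the identification $\mathrm{im}(\psi)=L\square L$ extracted from the sequence $(3)$, but this is immediate from the description of $(M\cw N,\alpha_\cw)$ as the quotient $(M\star N)/(M\square N)$; I expect no genuine obstacle.
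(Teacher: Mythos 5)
Your proposal is correct and follows essentially the same route as the paper: the paper's proof simply notes that $\rho$ is the composite of the isomorphism $(L\square L,\alpha_{\square})\cong(\Gamma(L^{ab}),\alpha_{\Gamma})$ coming from Proposition 3.5 applied to $(L,\alpha_L)$ with the isomorphism $(\Gamma(L^{ab}),\alpha_{\Gamma})\cong(L^{ab}\square L^{ab},\bar\alpha_{\square})$ obtained for the abelianisation. You merely spell out the details (identification of $\mathrm{im}(\psi)$ with $L\square L$ via exactness of $(3)$, surjectivity of $\alpha_{L^{ab}}$, and the commuting triangle on generators), which the paper leaves implicit.
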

\begin{proof}
It is sufficient to note that $\rho$ is equal to the composite
homomorphism\vspace{-.2cm}
\[\hspace{1cm}(L\square
L,\alpha_{\square})\stackrel{\cong}\lo(\Gamma(L^{ab}),
\alpha_{\Gamma})\stackrel{\cong}\lo(L^{ab}\square
L^{ab},\bar\alpha_{\square}).\vspace{-.95cm}\]
\end{proof}
\section{The Hopf's formula for Hom-Lie algebras}
In \cite{E}, Ellis proves that, for any Lie algebra $K$, the
second homology of $K$, $H_2(K)$, is isomorphic to the kernel of
the commutator map $K\wedge K\stackrel{[~,~]}\lo K$ (here $\wedge$
denotes the non-abelian exterior product of Lie algebras). Using
this result, he determines in \cite{E1} the behavior of the
functor $H_2(-)$ with respect to the direct sum of Lie algebras
and also, applying topological techniques, gets an eight-term
exact sequence in homology of Lie algebras\vspace{-.2cm}
\begin{equation}
H_3(K)\lo H_3(L)\lo H_2(K,M)\lo H_2(K)\lo H_2(L)\lo M/[M,K]\lo
H_1(K)\twoheadrightarrow H_1(L).\vspace{-.2cm}
\end{equation}
from a short exact sequence of Lie algebras $M\rightarrowtail
K\twoheadrightarrow L$ (here, $H_2(K,M)$ denotes the second
relative Chevalley-Eilenberg homology of the pair $(K,M)$, which
is isomorphic to $\ker(M\wedge K\lo K)$). In particular, he
obtains the Hopf's formula for Lie algebras and, moreover, shows
that if $F/R$ is a free presentation of $K$ and $S/R$ is the
induced presentation of $M$ for some ideal $S$ of $F$, then
$H_3(K)\cong\ker(R\wedge F\lo F)$ and $H_3(L)\cong\ker(S\wedge
F\lo F)$. In this section, we generalize these results to Hom-Lie
algebras. We start with the following theorem.
\begin{theorem}
Let $(L,\alpha_L)$ be any Hom-Lie algebra. Then there is an
isomorphism of Hom-vector spaces $ H_2^\alpha(L)
\cong(\ker(\lambda_L),\alpha_{\ker(\lambda_L)})$, where
$\lambda_L:(L\cw L,\alpha_{\cw})\lo(L,\alpha_L)$ is the commutator
Hom-map.
\end{theorem}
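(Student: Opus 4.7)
The plan is to fix a free presentation of $(L,\alpha_L)$ and identify both $H_2^\alpha(L)$ and $\ker(\lambda_L)$ with the Schur multiplier $\mathcal{M}(L,\alpha_L)=(R\cap[F,F])/[F,R]$, which is already known to be presentation-independent by the semi-abelian machinery cited in Section 2. First I would fix $(R,\alpha_R)\rightarrowtail(F,\alpha_F)\stackrel{\pi}{\twoheadrightarrow}(L,\alpha_L)$. The crucial preparatory step is to show that for the free Hom-Lie algebra $(F,\alpha_F)$ the commutator Hom-map $\lambda_F:(F\cw F,\alpha_{\cw})\lo(F,\alpha_F)$ is injective, i.e. $\ker(\lambda_F)=0$. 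The idea is to exploit the universal property of the free Hom-Lie algebra to construct a left inverse of the restriction $\lambda_F:F\cw F\to[F,F]$, using the defining relations (A1)--(A6) together with Corollary 3.6 to control the "symmetric" subalgebra $(F\square F,\alpha_{\square})$.

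Next I would apply the exact sequence (2) to the free presentation to obtain
\[(R\cw F,\alpha_{\cw})\stackrel{f}\lo(F\cw F,\alpha_{\cw})\twoheadrightarrow(L\cw L,\alpha_{\cw}),\]
and assemble it with the commutator Hom-maps into a commutative diagram whose lower row reads $[F,R]\hookrightarrow[F,F]\twoheadrightarrow[L,L]$. Applying the Snake Lemma (valid in $\mathbf{HomLie}$ since it is semi-abelian) and invoking the injectivity of $\lambda_F$ from the previous step, a diagram chase identifies $\ker(\lambda_L)$ with the subquotient $(R\cap[F,F])/[F,R]=\mathcal{M}(L,\alpha_L)$.

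For the second half I would establish the general Hopf formula $H_2^\alpha(L)\cong\mathcal{M}(L,\alpha_L)$ for arbitrary $(L,\alpha_L)$, extending \cite[Theorem 3.14]{CG1} beyond the perfect case. The natural path is to compute $H_2^\alpha(F)=0$ directly from the Chevalley--Eilenberg-type complex of the free Hom-Lie algebra $(F,\alpha_F)$ and then run the standard five-term exact sequence argument for the extension $(R,\alpha_R)\rightarrowtail(F,\alpha_F)\twoheadrightarrow(L,\alpha_L)$, reading off $H_2^\alpha(L)$ as $(R\cap[F,F])/[F,R]$. Combining this with the previous paragraph yields the desired isomorphism $H_2^\alpha(L)\cong\ker(\lambda_L)$, and functoriality of all constructions shows that it is compatible with $\alpha_{\ker(\lambda_L)}$, giving an isomorphism of Hom-vector spaces as claimed.

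The main obstacle will be the injectivity of $\lambda_F$ for a free $(F,\alpha_F)$. Since $(F\cw F,\alpha_{\cw})$ is defined as the quotient $(F\star F)/(F\square F)$, this amounts to showing that every element of $F\star F$ that is killed by the commutator already lies in the diagonal ideal $(F\square F,\alpha_{\square})$. The defining relations of the Hom-Lie tensor product are intricate (in particular (A4)--(A6) intertwine the bracket and the twisting $\alpha_F$), so the argument must leverage the freeness of $(F,\alpha_F)$ carefully; once this is in hand, the remainder of the proof reduces to standard diagram chasing and the structural results gathered in Section 3.
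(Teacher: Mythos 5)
Your first half (showing $\lambda_F$ is injective for a free Hom-Lie algebra and then using the sequence (2) for a free presentation to identify $\ker(\lambda_L)$ with $(R\cap[F,F])/[F,R]={\cal M}(L,\alpha_L)$) is sound in outline: it is exactly the content of Theorem 4.3 and Corollary 4.4 of the paper, and although your sketch of the injectivity of $\lambda_F$ is vaguer than the paper's explicit construction (which builds a section $[F,F]\to F\cw F$, $ab\mapsto \bar a\cw\bar b$, directly from the presentation $F=A_X/I$ of the free object), that step can be completed. The genuine gap is in the second half. You propose to prove the general Hopf formula $H_2^\alpha(L)\cong{\cal M}(L,\alpha_L)$ \emph{first}, by computing $H_2^\alpha(F)=0$ ``directly from the complex'' and then running ``the standard five-term exact sequence argument'' for $(R,\alpha_R)\rightarrowtail(F,\alpha_F)\twoheadrightarrow(L,\alpha_L)$. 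Neither ingredient is available at this point, and neither is routine. $H_*^\alpha$ is defined by Yau's explicit $\alpha$-twisted complex; it is not (in this paper or in the cited literature) identified with a derived or comonadic functor, so the five-term sequence associated to an arbitrary extension is not ``standard'' here --- an extension of Hom-Lie algebras does not give a short exact sequence of these chain complexes, and the Hochschild--Serre-type machinery one would need has not been set up. Indeed, in the paper the low-dimensional exact sequence (Theorem 4.6) is \emph{deduced from} Theorem 4.1 via the exterior product, not proved independently. Likewise $H_2^\alpha(F)=0$ is not a direct computation: even for ordinary free Lie algebras the vanishing of $H_2$ is proved homologically (via freeness of the augmentation ideal over $U(F)$), not by an elementary cycle/boundary count in the Chevalley--Eilenberg complex, and no enveloping-algebra machinery is available for the twisted complex; in the paper this vanishing is itself a corollary of the Hopf formula, hence of Theorem 4.1. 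So your plan runs the paper's logic backwards and rests on two unproven inputs that, in the paper's development, are consequences of the very statement you are trying to prove.

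For contrast, the paper's actual proof is short and chain-level: it observes that $\Im(d_3)$ is spanned exactly by the elements $-[x,y]\wedge\alpha_L(z)+\alpha_L(x)\wedge[y,z]+[x,z]\wedge\alpha_L(y)$, which correspond to relation (A4) (while skew-symmetry accounts for $L\square L$), so the map $(L\wedge L)/\Im(d_3)\to L\cw L$, $x\wedge y\mapsto x\cw y$, is an isomorphism of Hom-vector spaces; restricting it to the kernels of $\bar d_2$ and of $\lambda_L$ gives $H_2^\alpha(L)\cong\ker(\lambda_L)$ with no free presentations, no five-term sequence, and no vanishing theorem needed. If you want to salvage your route, you would first have to establish a five-term exact sequence for $H_*^\alpha$ and the vanishing of $H_2^\alpha$ on free objects by independent arguments, which is substantially harder than the direct comparison above.
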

\begin{proof}
We recall that the linear Hom-maps $d_2:C_2^\alpha(L,\F)\to
C_1^\alpha(L,\F)$ and $d_3:C_3^\alpha(L,\F)\to C_2^\alpha(L,\F)$
are defined by $d_2(x\wedge y)=[x,y]$ and $d_3(x\wedge y\wedge
z)=-[x,y]\wedge
\alpha_L(z)+\alpha_L(x)\wedge[y,z]+[x,z]\wedge\alpha_L(y)$,
respectively. Hence ${\rm Im}(d_3)=span\{-[x,y]\wedge
\alpha_L(z)+\alpha_L(x)\wedge[y,z]+[x,z]\wedge\alpha_L(y)~|~x,y,z\in
L\}$. Obviously, the linear Hom-map $\psi:((L\wedge
L)/\Im(d_3),\overline{\alpha_L^{\wedge2}})\to(L\cw
L,\alpha_{\cw})$ given by $\psi(x\wedge y +\Im(d_3))=x\cw y$ is
surjective. We now have the following diagram of Hom-vector
spaces, in which the rows are exact:

\tikzset{node distance=3cm, auto}$~~~~$
\begin{tikzpicture}[%
back line/.style={densely dotted}, cross
line/.style={preaction={draw=white, -,line width=6pt}}]
\hspace{1.6cm}\node(A1){\fontsize{9.5}{5}\selectfont$~$};
\node[right
of=A1](B1){\fontsize{9.5}{5}\selectfont$H_2^\alpha(L)$};
\node[right of=B1](C1){\fontsize{9.5}{5}\selectfont
$(\ds\frac{L\wedge L}{{\rm
Im}(d_3)},\overline{\alpha_L^{\wedge2}})$}; \node[right
of=C1](D1){\fontsize{9.5}{5}\selectfont$([L,L],\alpha_{[L,L]})$};
\node (B2)[below of=B1, node distance=1.7cm]{\fontsize{9.5}{5}
\selectfont$\hspace{-.3cm}(\ker(\lambda_L),\alpha_{\ker(\lambda_L)})$};
\node (C2)[below of=C1, node
distance=1.7cm]{\fontsize{9.5}{5}\selectfont$(L\cw
L,\alpha_{\cw})$}; \node (D2)[below of=D1, node
distance=1.7cm]{\fontsize{9.5}{5}\selectfont$([L,L],\alpha_{[L,L]})$};

\draw[>->](B1) to node{$\subseteq$}(C1);
\draw[->>](B1) to node[right]{\fontsize{9.5}{5}\selectfont$\psi_{|}$}(B2);
\draw[>->](B2) to node{\fontsize{9.5}{5}\selectfont$~$}(C2);
\draw[->>](C1) to node[right]{\fontsize{9.5}{5}\selectfont$\psi$}(C2);
\draw[->>](C1) to node{\fontsize{9.5}{5}\selectfont$\bar d_2$}(D1);
\draw[->>](C2) to node{\fontsize{9.5}{5}\selectfont$\lambda_L$}(D2);
\draw[>->>](D1) to node[right]{\fontsize{9.5}{5}\selectfont$=$}(D2);
\end{tikzpicture}\\
Note that we here consider $(L\cw L,\alpha_{\cw})$ as a Hom-vector
space. By comparing the relation (A4) and the generators of
$\Im(d_3)$, it follows that $\psi$ has an inverse $\psi'$ that
sends $x\cw y$ to $x\wedge y +\Im(d_3)$. We therefore deduce that
$\psi_|$ is an isomorphism, as required.
\end{proof}
The following corollary is a direct consequence of the above
theorem and Proposition 3.5.
\begin{corollary}
For any Hom-Lie algebra $(L,\alpha_L)$ with surjective
endomorphism $\alpha_L$, there is an exact sequence of Hom-vector
spaces $(\Gamma(L^{ab}),\alpha_{\Gamma})\rightarrowtail
J_2^{\alpha}(L)\twoheadrightarrow H_2^{\alpha}(L)$. In particular,
if $L$ is of finite dimension, then
$\dim(J_2^{\alpha}(L))=\dim(H_2^{\alpha}(L))+\dim(\Gamma(L^{ab}))$
$($as vector spaces$)$.
\end{corollary}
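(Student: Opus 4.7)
The plan is to extract the desired short exact sequence by restricting the exact sequence of Proposition 3.5 to kernels of the commutator maps. Write $\lambda_L^{\star}:(L\star L,\alpha_{\star})\to(L,\alpha_L)$ and $\lambda_L^{\cw}:(L\cw L,\alpha_{\cw})\to(L,\alpha_L)$ for the two commutator Hom-maps, and $\pi:(L\star L,\alpha_{\star})\twoheadrightarrow(L\cw L,\alpha_{\cw})$ for the canonical projection, so that $\lambda_L^{\cw}\circ\pi=\lambda_L^{\star}$.

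First I would check that the embedding $\psi:(\Gamma(L^{ab}),\alpha_{\Gamma})\rightarrowtail(L\star L,\alpha_{\star})$ supplied by Proposition 3.5 actually lands in $J_2^{\alpha}(L)$: on a generator, $\psi(\gamma(\bar x))=x\star x$, and $\lambda_L^{\star}(x\star x)=[x,x]=0$, so the image of $\psi$ is contained in $\ker\lambda_L^{\star}=J_2^{\alpha}(L)$.

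The main step is to apply the Snake Lemma to the commutative diagram of Hom-vector spaces with exact rows
\[
\begin{array}{ccccccccc}
0 & \to & \Gamma(L^{ab}) & \stackrel{\psi}{\to} & L\star L & \stackrel{\pi}{\to} & L\cw L & \to & 0 \\
 & & \downarrow 0 & & \downarrow \lambda_L^{\star} & & \downarrow \lambda_L^{\cw} & & \\
0 & \to & 0 & \to & L & = & L & \to & 0
\end{array}
\]
By Theorem 4.1, the right-hand kernel identifies with $H_2^{\alpha}(L)$; the middle kernel is $J_2^{\alpha}(L)$ by definition; and the left kernel is all of $\Gamma(L^{ab})$. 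Since the cokernel of the leftmost vertical map is $0$, the Snake Lemma yields the short exact sequence $(\Gamma(L^{ab}),\alpha_{\Gamma})\rightarrowtail J_2^{\alpha}(L)\twoheadrightarrow H_2^{\alpha}(L)$. The finite-dimensional assertion is then immediate from additivity of dimension on short exact sequences.

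The only thing requiring care, which is essentially routine, is verifying that the resulting sequence really sits in the category of Hom-vector spaces, i.e.\ that all induced maps commute with the relevant $\alpha$-endomorphisms. This is automatic, because $\psi$, $\pi$, $\lambda_L^{\star}$, and $\lambda_L^{\cw}$ are Hom-Lie algebra homomorphisms, and their restrictions to kernels inherit compatibility with $\alpha$. So no substantive obstacle arises beyond correctly identifying the three kernels and reading off the snake sequence.
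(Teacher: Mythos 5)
Your argument is correct and is essentially the paper's own route: the statement is recorded there as a direct consequence of Theorem 4.1 together with Proposition 3.5, and your Snake Lemma diagram (with the identification $\ker\lambda_L^{\cw}\cong H_2^{\alpha}(L)$ and $\psi$ landing in $J_2^{\alpha}(L)$ since $\lambda_L^{\star}(x\star x)=[x,x]=0$) just spells out that deduction in detail. The dimension formula then follows exactly as you say.
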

\begin{theorem}
Let $(F,\alpha_F)$ be any free Hom-Lie algebra. Then there is an
isomorphism of Hom-Lie algebras $(F\cw
F,\alpha_{\cw})\cong([F,F],\alpha_{[F,F]})$.
\end{theorem}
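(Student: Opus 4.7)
The commutator Hom-map $\lambda_F:(F\cw F,\alpha_\cw)\to(F,\alpha_F)$ surjects onto $[F,F]$ by construction, and by the exterior-product analog of Proposition 3.1(i) its kernel is central in $F\cw F$. Combined with Theorem 4.1, this identifies $(\ker(\lambda_F),\alpha_\cw|)\cong H_2^\alpha(F)$ as Hom-vector spaces, so the theorem reduces to proving $H_2^\alpha(F)=0$ for every free Hom-Lie algebra $(F,\alpha_F)$; once this is known, $\lambda_F$ is injective and the quotient onto $[F,F]$ furnishes the desired isomorphism of Hom-Lie algebras.

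To establish the vanishing, I would analyze the chain complex $(C_\ast^\alpha(F,\F),d_\ast)$ directly. By the isomorphism $\psi:((F\wedge F)/\Im(d_3),\overline{\alpha_F^{\wedge 2}})\to(F\cw F,\alpha_\cw)$ constructed in the proof of Theorem 4.1, which intertwines $\bar d_2$ with $\lambda_F$, the statement $H_2^\alpha(F)=0$ is equivalent to showing that every $\omega=\sum_i x_i\wedge y_i\in F\wedge F$ with $\sum_i[x_i,y_i]=0$ in $F$ already belongs to $\Im(d_3)$, i.e. is a linear combination of the Hom-Jacobi generators $-[x,y]\wedge\alpha_F(z)+\alpha_F(x)\wedge[y,z]+[x,z]\wedge\alpha_F(y)$. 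Using the explicit description $F=\mathfrak{F_r}(X,\alpha_X)=A_X/I$ recalled in Section 2.2, the hypothesis $\sum_i[x_i,y_i]=0$ means that the lift $\sum_i x_iy_i\in A_X$ lies in the two-sided ideal $I$, hence admits a finite rewriting in terms of the generators of $I$ (the skew-symmetry relations and the $\alpha_A$-twisted Hom-Jacobi relations). Lifting this rewriting to $F\wedge F$ exhibits $\omega$ as a sum of generators of $\Im(d_3)$ plus elements that vanish by antisymmetry of $\wedge$, and thus $\omega\in\Im(d_3)$.

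The principal obstacle is the bookkeeping involved in transferring the $A_X$-level rewriting of $\sum_i x_iy_i$ into the wedge-product setting while preserving all $\alpha_F$-twists. This is feasible because the $\alpha_A$-twists appearing in the Hom-Jacobi generators of $I$ match, after passage to $F\wedge F$, exactly the $\alpha_F$-twists of the generators of $\Im(d_3)$; even so, one needs a careful induction on the length of the rewriting (and, at the base step, on the complexity of the monomials $x_iy_i$) to ensure that the correspondence is well-defined and that no stray terms survive. This is the technically delicate part of the argument.
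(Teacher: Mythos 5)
Your plan is correct, but it reaches the theorem by a different route than the paper. The paper never passes through $H_2^\alpha(F)$: it proves injectivity of $\lambda_F$ directly by exhibiting an inverse, namely the linear map on the subspace $Y\subseteq A_X$ spanned by products, $ab\mapsto\bar a\cw\bar b$, which kills the generators $ab+ba$ and $\alpha_A(a)(bc)+\alpha_A(b)(ca)+\alpha_A(c)(ab)$ of the relation ideal $I$ (using the defining relations of the exterior product), hence descends to $\bar\phi:[F,F]\lo F\cw F$ with $\lambda_F\circ\bar\phi=id$ and $\bar\phi\circ\lambda_F=id$. You instead translate the statement, via Theorem 4.1, into the vanishing of $H_2^\alpha(F)$ and argue at the chain level that every $2$-cycle lies in $\Im(d_3)$, lifting to the same presentation $F=A_X/I$; under the isomorphism $\psi$ of Theorem 4.1 this is the same computation in different clothing. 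Your version makes explicit that free Hom-Lie algebras have trivial second homology (which the paper only records after Corollary 4.5), at the price of depending on Theorem 4.1, while the paper's proof is self-contained. When executing your middle step, note two points: elements of the two-sided ideal $I$ are not linear combinations of the listed generators alone but of iterated products of generators with arbitrary elements of $A_X$, and such product terms vanish not by antisymmetry but because one wedge factor is already zero in $F$; and the assignment $uv\mapsto\bar u\wedge\bar v$ is a well-defined linear map on $Y$ (unique decomposition of magma monomials), so no induction on the rewriting is needed---applying this map to the ideal-membership identity leaves only images of Hom-Jacobi generators, which are precisely generators of $\Im(d_3)$, and images of skew generators, which vanish by antisymmetry.
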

\begin{proof}
We have only to prove that $\lambda_F:(F\cw
F,\alpha_{\cw})\lo([F,F],\alpha_{[F,F]})$ is injective. Using the
same notations as in Subsection 2.2, suppose
$(F,\alpha_F)=(A_X/I,\bar\alpha_A)$ and $(Y,\alpha_Y)$ is a
subalgebra of $(A_X,\alpha_A)$, where $Y$ is the
$\alpha_A$-invariant subspace of $A_X$ generated by the set
$\{ab~|~a,b\in A_X\}$. Note that each element $x\in Y$ is written
as a unique finite sum $\ds\sum_{i=1}^{n}a_ib_i$, $a_i,b_i\in
A_X$. Therefore, $\phi:(Y,\alpha_Y)\lo(F\cw F,\alpha_{\cw})$,
given by $ab\longmapsto\bar a\cw\bar b$, is a well-defined linear
Hom-map (where $\bar a$ denotes the coset $a+I\in A_X/I$). For any
$a,b,c\in A_X$, we have\vspace{-.1cm}
\[\phi(ab+ba)=\bar a\cw\bar b+\bar b\cw\bar a=0,\vspace{-.14cm}\]
\[\phi(\alpha_A(a)bc+\alpha_A(b)ca+\alpha_A(c)ab)=
\bar\alpha_A(\bar a)\cw[\bar b,\bar c]+\bar\alpha_A(\bar
b)\cw[\bar c,\bar a]+\bar\alpha_A(\bar c)\cw[\bar a,\bar
b])=0.\vspace{-.08cm}\] Hence $\phi$ induces a linear Hom-map
$\bar\phi:([F,F]=Y/I,\alpha_{[F,F]})\lo(F\cw F,\alpha_{\cw})$.
Furthermore, $\lambda_F\circ\bar\phi=id_{[F,F]}$ and $\bar\phi
\circ\lambda_F=id_{F\cw F}$. This completes the proof.
\end{proof}
From the above theorem, we have the following corollary.
\begin{corollary}
Let $(R,\alpha_R)\rightarrowtail(F,\alpha_F)
\stackrel{\pi}\twoheadrightarrow(L,\alpha_L)$ be a free
presentation of the Hom-Lie algebra $(L,\alpha_L)$. Then there is
an isomorphism of Hom-Lie algebras $(L\cw
L,\alpha_{\cw})\cong([F,F]/[R,F],\bar\alpha_{[F,F]})$. In
particular, ${\cal
M}(L,\alpha_L)\cong(\ker(\lambda_L),\alpha_{\ker(\lambda_L)})$.
\end{corollary}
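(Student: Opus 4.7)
The plan is to combine the functorial exact sequence $(2)$ applied to the given free presentation with the identification of $(F\cw F,\alpha_{\cw})$ supplied by Theorem 4.3, and then to read off the Schur multiplier as the kernel of $\lambda_L$ under the resulting isomorphism.

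First, I would apply the exact sequence $(2)$ to the short exact sequence $(R,\alpha_R)\rightarrowtail(F,\alpha_F)\twoheadrightarrow(L,\alpha_L)$, obtaining an exact sequence of Hom-Lie algebras
\[(R\cw F,\alpha_{\cw})\stackrel{\mu}{\lo}(F\cw F,\alpha_{\cw})\twoheadrightarrow(L\cw L,\alpha_{\cw}),\]
so that $(L\cw L,\alpha_{\cw})$ is naturally the quotient of $(F\cw F,\alpha_{\cw})$ by the image of $\mu$. By Theorem 4.3, the commutator Hom-map $\lambda_F:(F\cw F,\alpha_{\cw})\lo([F,F],\alpha_{[F,F]})$, sending $f_1\cw f_2$ to $[f_1,f_2]$, is an isomorphism.

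The core step, which I expect to be the only real piece of verification, is to identify $\lambda_F(\Im(\mu))$ with $[R,F]$. Tracing through the construction of the functorial homomorphism described in Proposition 3.1$(iv)$ and carried over to $\cw$ via the exact sequence $(2)$, a generator $r\cw f$ of $R\cw F$ is sent by $\mu$ to the same symbol $r\cw f\in F\cw F$, and then by $\lambda_F$ to $[r,f]\in[F,F]$. Since the elements $[r,f]$ with $r\in R$, $f\in F$ span $[R,F]$ by definition, I obtain $\lambda_F(\Im(\mu))=[R,F]$. Passing to the quotient produces the desired isomorphism
\[(L\cw L,\alpha_{\cw})\cong([F,F]/[R,F],\bar\alpha_{[F,F]}).\]

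For the "In particular" assertion, I would transport the commutator Hom-map $\lambda_L:(L\cw L,\alpha_{\cw})\lo(L,\alpha_L)$ across the isomorphism just constructed: it becomes the map $[F,F]/[R,F]\lo L=F/R$ induced by the inclusion $[F,F]\hookrightarrow F$ followed by $\pi$. The kernel of this induced map is manifestly $(R\cap[F,F])/[R,F]$, which is by definition ${\cal M}(L,\alpha_L)$, so $(\ker(\lambda_L),\alpha_{\ker(\lambda_L)})\cong{\cal M}(L,\alpha_L)$. Compatibility of the Hom-structures is automatic since every map in sight is a Hom-Lie homomorphism and the $\alpha$ on the multiplier is induced from $\alpha_F$; no substantial obstacle is anticipated beyond the bookkeeping of the generator-level description of $\mu$.
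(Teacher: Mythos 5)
Your argument is correct and follows essentially the same route as the paper: both exploit the exact sequence $(2)$ for the free presentation together with the isomorphism $\lambda_F$ of Theorem 4.3, identify the image of $(R\cw F,\alpha_{\cw})$ in $(F\cw F,\alpha_{\cw})$ with $[R,F]$ under $\lambda_F$, and then read off $\ker(\lambda_L)\cong(R\cap[F,F])/[R,F]={\cal M}(L,\alpha_L)$ from the induced map to $L$. The paper packages this in a commutative diagram with exact rows, but the content is the same as your generator-level bookkeeping.
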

\begin{proof}
Consider the following commutative diagram of Hom-Lie algebras
with exacts rows:\vspace{.1cm}

\tikzset{node distance=3cm, auto}$~~~~$
\begin{tikzpicture}[%
back line/.style={densely dotted}, cross
line/.style={preaction={draw=white, -,line width=6pt}}]
\hspace{1.6cm}\node(A1){\fontsize{9.5}{5}\selectfont$~$};
   \node[right of=A1](B1){\fontsize{9.5}{5}\selectfont$(F\cw R,\alpha_{\cw})$};
   \node[right of=B1](C1){\fontsize{9.5}{5}\selectfont$(F\cw F,\alpha_{\cw})$};
   \node[right of=C1](D1){\fontsize{9.5}{5}\selectfont$(L\cw L,\alpha_{\cw})$};
   \node (B2)[below of=B1, node distance=1.7cm]{\fontsize{9.5}{5}\selectfont$(\ker(\pi_|),\alpha_{\ker(\pi_|)})$};
   \node (C2)[below of=C1, node distance=1.7cm]{\fontsize{9.5}{5}\selectfont$([F,F],\alpha_{[F,F]})$};
   \node (D2)[below of=D1, node distance=1.7cm]{\fontsize{9.5}{5}\selectfont$([L,L],\alpha_{[L,L]}),$};

   \draw[->](B1) to node{$\theta$}(C1);
   \draw[->>](B1) to node[right]{\fontsize{9.5}{5}\selectfont${\lambda_F|}$}(B2);
   \draw[>->](B2) to node{\fontsize{9.5}{5}\selectfont$~$}(C2);
   \draw[->](C1) to node[right]{\fontsize{9.5}{5}\selectfont$\lambda_F$}(C2);
   \draw[->>](C1) to node{\fontsize{9.5}{5}\selectfont$~$}(D1);
   \draw[->>](C2) to node{\fontsize{9.5}{5}\selectfont$\pi_|$}(D2);
   \draw[->>](D1) to node[right]{\fontsize{9.5}{5}\selectfont$\lambda_L$}(D2);
\end{tikzpicture}\\
Evidently, $\lambda_F$ maps the subalgebra
$(\Im(\theta),\alpha_{\Im(\theta)})$ isomorphically onto
$([R,F],\alpha_{[R,F]})$. We consequently conclude from Theorem
4.3 that\vspace{-.2cm}
\[(L\cw L,\alpha_{\cw})\cong(\ds\f{F\cw F}{\Im(\theta)},\bar\alpha_{\cw})
\cong(\ds\f{[F,F]}{[R,F]},\bar\alpha_{[F,F]}),\vspace{-.2cm}\]and
the proof is complete.
\end{proof}
Combining the above corollary with Theorem $4.1$, we have the main
result of this section.
\begin{corollary}[Hopf's formula for Hom-Lie algebras]
For any Hom-Lie algebra $(L,\alpha_L)$, there is an isomorphism of
Hom-vector spaces $H_2^\alpha(L)\cong{\cal M}(L,\alpha_L)$.
\end{corollary}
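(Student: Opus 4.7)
The proof plan is essentially a one-line composition of two results already established in this section, so I will describe how I would assemble it and what, if anything, needs checking.

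The main idea is as follows. Theorem 4.1 gives a canonical isomorphism of Hom-vector spaces
\[
H_2^\alpha(L) \;\cong\; (\ker(\lambda_L), \alpha_{\ker(\lambda_L)}),
\]
where $\lambda_L \colon (L\cw L, \alpha_\cw) \lo (L,\alpha_L)$ is the commutator Hom-map. On the other hand, Corollary 4.4, applied to an arbitrary free presentation $(R,\alpha_R) \rightarrowtail (F,\alpha_F) \twoheadrightarrow (L,\alpha_L)$, gives
\[
\mathcal{M}(L,\alpha_L) \;\cong\; (\ker(\lambda_L), \alpha_{\ker(\lambda_L)}).
\]
Composing the first isomorphism with the inverse of the second yields the desired Hom-vector space isomorphism $H_2^\alpha(L) \cong \mathcal{M}(L,\alpha_L)$.

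The only point worth flagging is that both sides of each cited isomorphism are interpreted as Hom-vector spaces (not as Hom-Lie algebras), so the composition makes sense in the stated category. In particular, the endomorphism on each side is the restriction of $\alpha_\cw$ to $\ker(\lambda_L)$, so the two isomorphisms in fact agree on the underlying twisted structures, not merely on the underlying vector spaces. There is nothing to verify beyond recording that this common identification of $\ker(\lambda_L)$ is canonical and independent of the chosen free presentation, which is precisely the content of Theorem 6.9 of \cite{EV} as invoked in Subsection 2.2. I anticipate no genuine obstacle: the heavy lifting was done in Theorem 4.1 (identifying $H_2^\alpha(L)$ via the exterior square) and in Theorem 4.3 together with Corollary 4.4 (identifying $\mathcal{M}(L,\alpha_L)$ with the same kernel via the Hopf-type quotient $[F,F]/[R,F]$). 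The corollary is therefore an immediate formal consequence.
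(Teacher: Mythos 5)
Your proposal is correct and is exactly the paper's own argument: the paper proves this corollary by combining Theorem 4.1 ($H_2^\alpha(L)\cong(\ker(\lambda_L),\alpha_{\ker(\lambda_L)})$) with Corollary 4.4 ($\mathcal{M}(L,\alpha_L)\cong(\ker(\lambda_L),\alpha_{\ker(\lambda_L)})$, derived from Theorem 4.3). Your additional remark about both identifications carrying the same restricted endomorphism $\alpha_\cw|$ is a sensible sanity check but introduces nothing beyond what the paper does.
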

As an immediate consequence of Corollary 4.5, we conclude that the
second homology of any free Hom-Lie algebra is trivial.

Using Theorem 4.1, we generalize the exact sequence $(4)$ for
Hom-Lie algebras.
\begin{theorem}
Let $e: (M,\alpha_M)\rightarrowtail(K,\alpha_K)
\twoheadrightarrow(L,\alpha_L)$ be an extension of Hom-Lie
algebras. Let $(R,\alpha_R)\rightarrowtail(F,\alpha_F)
\twoheadrightarrow(K,\alpha_K)$ be a free presentation of
$(K,\alpha_K)$ and $(M,\alpha_M)\cong(S/R,\bar\alpha_S)$ for some
ideal $(S,\alpha_S)$ of $(F,\alpha_F)$. Then there is an exact
sequence of Hom-vector spaces\vspace{-.1cm}
\[
(\ker(R\cw F\lo F),\alpha_{\cw}|)\lo(\ker(S\cw F\lo
F),\alpha_{\cw}|)\lo(\ker(M\cw K\lo K),\alpha_{\cw}|)\lo
H_2^{\alpha}(K)\vspace{-.08cm}\]
\begin{equation}
\hspace{1.2cm}\lo H_2^{\alpha}(L)\lo(\f{M}{[M,K]},\bar\alpha_M)\lo
H_1^{\alpha}(K)\twoheadrightarrow H_1^{\alpha}(L).\vspace{-.1cm}
\end{equation}
Moreover, if the extension $e$ is split, then the sequence $(5)$
induces a short exact sequence\vspace{-.12cm}
\begin{equation}
(\ker(M\cw K\lo K),\alpha_{\cw}|)\rightarrowtail
H_2^{\alpha}(K)\twoheadrightarrow H_2^{\alpha}(L).\vspace{-.2cm}
\end{equation}
\end{theorem}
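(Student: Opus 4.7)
My plan is to derive the rightmost six terms of (5) from a single snake lemma, splice the three leftmost terms on via functoriality of $\cw$ combined with the free presentation of $K$, and then specialize to the split case to obtain (6). Applying (2) to $e$ yields the exact row $(M\cw K,\alpha_\cw)\xrightarrow{\iota}(K\cw K,\alpha_\cw)\xrightarrow{\pi_\cw}(L\cw L,\alpha_\cw)\to 0$, which I place above $0\to M\to K\to L\to 0$ using the commutator maps $\mu:=\lambda_K\circ\iota$ (which lands in $M$ because $M$ is an ideal), $\lambda_K$ and $\lambda_L$ as vertical arrows. By Theorem 4.1 the kernels of these vertical maps are $T_M:=\ker(M\cw K\to K)$, $H_2^\alpha(K)$ and $H_2^\alpha(L)$, and their cokernels are $M/[M,K]$, $H_1^\alpha(K)$ and $H_1^\alpha(L)$. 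The snake lemma then produces the exact sequence
\[T_M\to H_2^\alpha(K)\to H_2^\alpha(L)\to M/[M,K]\to H_1^\alpha(K)\to H_1^\alpha(L)\to 0,\]
which is the rightmost portion of (5).

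To extend to the left, I define $T_R\to T_S\to T_M$ by functoriality of $\cw$ applied to $R\hookrightarrow S$ (with $F=F$) and to $S\twoheadrightarrow M$, $F\twoheadrightarrow K$ respectively; short checks confirm these maps land in the defining kernels. For exactness at $T_M$, note that $\ker\iota\subseteq T_M$, so the kernel of $T_M\to H_2^\alpha(K)$ equals $\ker\iota$; and $\Im(T_S\to T_M)\subseteq\ker\iota$ because $T_S=\ker(S\cw F\to F\cw F)$ implies that the image of $T_S$ in $K\cw K$ vanishes. For the reverse inclusion, lift $y\in\ker\iota$ to $x\in S\cw F$ via the surjection $S\cw F\twoheadrightarrow M\cw K$; by Theorem 4.3 and Corollary 4.4, vanishing of the image of $x$ in $K\cw K$ forces $\lambda_{S,F}(x)\in[R,F]$, so writing $\lambda_{S,F}(x)=\sum[r_i,f_i]$ and setting $x':=x-\sum r_i\cw f_i$ in $S\cw F$ (with $\sum r_i\cw f_i$ pushed in from $R\cw F$) yields $x'\in T_S$ that still lifts $y$.

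The main obstacle is exactness at $T_S$. The composite $T_R\to T_S\to T_M$ vanishes because $R\to M=S/R$ is zero. Conversely, for $y\in T_S$ with trivial image in $M\cw K$, the key technical claim is $\ker(S\cw F\twoheadrightarrow M\cw K)=\Im(R\cw F\to S\cw F)$. I will establish this via an exterior analog of Proposition 3.1(iv) applied to the pair of extensions $R\rightarrowtail F\twoheadrightarrow K$ and $R\rightarrowtail S\twoheadrightarrow M$, which presents this kernel as the sum of the images of $R\cw F$ and $S\cw R$ in $S\cw F$. The reduction rests on the observation that, since $R\subseteq S$, any generator $s\cw r$ with $s\in S$, $r\in R$ has both slots inside $S$; the defining relation $x\cw x=0$ in $S\cw F$ (valid for $x\in S$ since $\{s\star s:s\in S\}$ spans the diagonal subspace $S\square F$) then gives $s\cw r=-r\cw s$, identifying the image of $S\cw R$ with a subset of the image of $R\cw F$. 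Granted the claim, $y$ is the image of some $w\in R\cw F$, and $\lambda_{R,F}(w)=\lambda_{S,F}(y)=0$ forces $w\in T_R$.

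For the split case (6): if $e$ splits, Lemma 3.2 makes $\iota$ injective, so $T_M\hookrightarrow H_2^\alpha(K)$. Moreover, a splitting of $K\twoheadrightarrow L$ induces one of $K^{ab}\twoheadrightarrow L^{ab}$, making the subsequent map $M/[M,K]\hookrightarrow H_1^\alpha(K)$ injective; by exactness of (5) at $M/[M,K]$, the connecting map $H_2^\alpha(L)\to M/[M,K]$ then vanishes, and (5) collapses to the short exact sequence (6).
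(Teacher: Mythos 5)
Your proposal is correct in substance, but it reaches the left-hand part of the sequence and the split case by a route different from the paper's. The paper also starts from the snake lemma applied to the diagram with rows $(M\cw K)\to(K\cw K)\twoheadrightarrow(L\cw L)$ and $M\rightarrowtail K\twoheadrightarrow L$, but then it attaches the three leftmost terms by a \emph{second} snake lemma, applied to $(R\cw F)\to(S\cw F)\twoheadrightarrow(M\cw K)$ over $R\rightarrowtail S\twoheadrightarrow M$, and splices the two six-term sequences by showing $\ker\Delta_2=\ker\delta$ via the commutative square $\Delta_2=(\subseteq)\circ\beta\circ\delta$, where $\beta:\ker(\lambda_K)\cong H_2^{\alpha}(K)$ and $H_2^{\alpha}(K)\subseteq R/[R,F]$ is Hopf's formula (Corollary 4.5). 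Your element-level chases at $T_M$ and $T_S$, based on $F\cw F\cong[F,F]$ (Theorem 4.3) and $K\cw K\cong[F,F]/[R,F]$ (Corollary 4.4), with the correction term $x'=x-\sum r_i\cw f_i$, are an explicit, unwound form of exactly that splice, so the two arguments rest on the same facts; note that your ``key technical claim'' $\ker(S\cw F\to M\cw K)=\mathrm{Im}(R\cw F\to S\cw F)$ is precisely the exactness of the top row of the paper's second diagram, which the paper asserts without proof, whereas you at least sketch it via the exterior analogue of Proposition 3.1$(iv)$ together with the antisymmetry $s\cw r=-r\cw s$ in $S\cw F$; that sketch is sound and is arguably a useful addition. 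What the paper's route buys is brevity and no need to manipulate individual generators; what yours buys is that exactness at $T_S$ and $T_M$ is verified transparently rather than delegated to an unproved row.

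In the split case your overall scheme works, but the one loosely justified step is the injectivity of $M/[M,K]\to H_1^{\alpha}(K)$: the mere existence of a section of $K^{ab}\twoheadrightarrow L^{ab}$ does not by itself give it. What is needed is $M\cap[K,K]=[M,K]$, which is exactly the computation the paper performs after writing $K=M\dot{+}L$ (equivalently, $K^{ab}\cong M/[M,K]\oplus L^{ab}$ for $K\cong M\rtimes L$); alternatively, you could argue more functorially that the splitting induces a section of $H_2^{\alpha}(K)\to H_2^{\alpha}(L)$, giving surjectivity directly. The paper instead deduces surjectivity from a $3\times3$ diagram built on $[K,K]\cap M=[M,K]$ and $(M\cw K)\cap\ker(\lambda_K)=\ker(\lambda_M)$ (both via Lemma 3.2), so your use of Lemma 3.2 for the injective end of $(6)$ agrees with the paper.
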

\begin{proof}
Consider the following commutative diagrams of Hom-Lie
algebras:\vspace{.3cm}

\tikzset{node distance=2.4cm, auto}$~~~~$
\begin{tikzpicture}[%
back line/.style={densely dotted}, cross
line/.style={preaction={draw=white, -,line width=6pt}}]
\hspace{-2.2cm}\node(A1){\fontsize{9.5}{5}\selectfont$~$};
  \node[right of=A1](B1){\fontsize{9.5}{5}\selectfont$(M\cw K,\alpha_{\cw})$};
  \node[right of=B1](C1){\fontsize{9.5}{5}\selectfont$(K\cw K,\alpha_{\cw})$};
  \node[right of=C1](D1){\fontsize{9.5}{5}\selectfont$(L\cw L,\alpha_{\cw})$};
  \node (B2)[below of=B1, node distance=1.7cm]{\fontsize{9.5}{5}\selectfont$(M,\alpha_{M})$};
  \node (C2)[below of=C1, node distance=1.7cm]{\fontsize{9.5}{5}\selectfont$(K,\alpha_{K})$};
  \node (D2)[below of=D1, node distance=1.7cm]{\fontsize{9.5}{5}\selectfont$(L,\alpha_{L})$};

  \draw[->](B1) to node{$~$}(C1);
  \draw[->](B1) to node[right]{\fontsize{9.5}{5}\selectfont$\lambda_M$}(B2);
  \draw[>->](B2) to node{\fontsize{9.5}{5}\selectfont$~$}(C2);
  \draw[->](C1) to node[right]{\fontsize{9.5}{5}\selectfont$\lambda_K$}(C2);
  \draw[->>](C1) to node{\fontsize{9.5}{5}\selectfont$$}(D1);
  \draw[->>](C2) to node{\fontsize{9.5}{5}\selectfont$~$}(D2);
  \draw[->](D1) to node[right]{\fontsize{9.5}{5}\selectfont$\lambda_L$}(D2);
\end{tikzpicture}\\\vspace{-2.3cm}

\tikzset{node distance=2.1cm, auto} $~~~~$
\begin{tikzpicture}[%
back line/.style={densely dotted},
cross line/.style={preaction={draw=white, -,line width=4pt}}]
\hspace{6.7cm}
\node(A1){and};

\end{tikzpicture}\\\vspace{-1.96cm}

\tikzset{node distance=2.4cm, auto}$~~~~$
\begin{tikzpicture}[%
back line/.style={densely dotted}, cross
line/.style={preaction={draw=white, -,line width=6pt}}]
\hspace{6.5cm}\node(A1){\fontsize{9.5}{5}\selectfont$~$};
   \node[right of=A1](B1){\fontsize{9.5}{5}\selectfont$(R\cw F,\alpha_{\cw})$};
   \node[right of=B1](C1){\fontsize{9.5}{5}\selectfont$(S\cw F,\alpha_{\cw})$};
   \node[right of=C1](D1){\fontsize{9.5}{5}\selectfont$(M\cw L,\alpha_{\cw})$};
   \node (B2)[below of=B1, node distance=1.7cm]{\fontsize{9.5}{5}\selectfont$(R,\alpha_{R})$};
   \node (C2)[below of=C1, node distance=1.7cm]{\fontsize{9.5}{5}\selectfont$(S,\alpha_{S})$};
   \node (D2)[below of=D1, node distance=1.7cm]{\fontsize{9.5}{5}\selectfont$(M,\alpha_{M})$};

   \draw[->](B1) to node{$~$}(C1);
   \draw[->](B1) to node[right]{\fontsize{9.5}{5}\selectfont$\lambda_R$}(B2);
   \draw[>->](B2) to node{\fontsize{9.5}{5}\selectfont$~$}(C2);
   \draw[->](C1) to node[right]{\fontsize{9.5}{5}\selectfont$\lambda_S$}(C2);
   \draw[->>](C1) to node{\fontsize{9.5}{5}\selectfont$$}(D1);
   \draw[->>](C2) to node{\fontsize{9.5}{5}\selectfont$$}(D2);
   \draw[->](D1) to node[right]{\fontsize{9.5}{5}\selectfont$\lambda_M$}(D2);
\end{tikzpicture}

\hspace{-.65cm}where, the rows are exact. Applying the Snake Lemma
to these diagrams, we have the exact sequences\vspace{.15cm}

$(\ker(\lambda_M),\alpha_{\cw}|)\stackrel{\delta}\lo(\ker(\lambda_K),\alpha_{\cw}|)
\lo(\ker(\lambda_L),\alpha_{\cw}|)\stackrel{\Delta_1}\lo
(\f{M}{[M,K]},\bar\alpha_M)\lo H_1^{\alpha}(K)\twoheadrightarrow
H_1^{\alpha}(L)$,\vspace{.15cm}

$(\ker(\lambda_R),\alpha_{\cw}|)\lo(\ker(\lambda_S),\alpha_{\cw}|)
\lo(\ker(\lambda_M),\alpha_{\cw}|)\stackrel{\Delta_2}\lo
(\f{R}{[R,F]},\bar\alpha_R)$,\vspace{.15cm}\\ where $\Delta_i$,
$i=1,2$, is the connecting homomorphism of Hom-vector spaces. It
now remains to show that $\ker\Delta_2=\ker\delta$. But this
immediately follows from the commutative diagram\vspace{.4cm}

\tikzset{node distance=4.cm, auto} $~~~~$
\begin{tikzpicture}[%
back line/.style={densely dotted},
cross line/.style={preaction={draw=white, -,line width=4pt}}]
\hspace{4cm}
   \node(A1){$(\ker(\lambda_M),\alpha_{\cw}|)$};
   \node [right of=A1] (B1) {$\ds(\f{R}{[R,F]},\bar\alpha_R)$};
   \node (A2) [below of=A1, node distance=1.6cm]
         {$(\ker(\lambda_K),\alpha_{\cw}|)$};
   \node [right of=A2] (B2) {$H_2^{\alpha}(K)$,};

   \draw[->](A1) to node {$\delta$} (A2);
   \draw[<-<](B1) to node {$\subseteq$} (B2);
   \draw[->](A1) to node {$\Delta_2$} (B1);
   \draw[>->>](A2) to node {$\beta$} (B2);
\end{tikzpicture}\\
where $\beta$ is the isomorphism obtained in Theorem 4.1. Thus,
the sequence $(5)$ is exact.

We now verify the exactness of $(6)$. According to the points
mentioned at the end of Subsection 2.1, we can assume that
$(L,\alpha_L)$ is a subalgebra of $(K,\alpha_K)$ such that
$K=M\dot{+}L$. Then \vspace{-.15cm}
\[[K,K]\cap M=([L,L]+[M,K])\cap
M=([L,L]\cap M)+[M,K]=[M,K].\vspace{-.15cm}\] Also, invoking Lemma
3.2, we may consider $(M\cw K,\alpha_{\cw})$ as a subalgebra of
$(K\cw K,\alpha_{\cw})$, which implies that $(M\cw
K)\cap\ker(\lambda_K)=\ker(\lambda_M)$. So, we obtain the
following commutative diagram of Hom-Lie algebras:\vspace{.2cm}

\tikzset{node distance=3.2cm, auto}$~~~~$
\begin{tikzpicture}[%
back line/.style={densely dotted}, cross
line/.style={preaction={draw=white, -,line width=6pt}}]
\hspace{1.6cm}\node(A1){\fontsize{9.5}{5}\selectfont$~$};
   \node[right of=A1](B1){\fontsize{9.5}{5}\selectfont$\hspace{-.8cm}(\ker(\lambda_M),\alpha_{\cw}|)$};
   \node[right of=B1](C1){\fontsize{9.5}{5}\selectfont$H_2^{\alpha}(K)$};
   \node[right of=C1](D1){\fontsize{9.5}{5}\selectfont$H_2^{\alpha}(L)$};

   \node(B2)[below of=B1, node distance=1.7cm]{\fontsize{9.5}{5}\selectfont
             $\hspace{-1cm}(M\cw K,\alpha_{\cw})$};
   \node(C2)[below of=C1, node distance=1.7cm]{\fontsize{9.5}{5}\selectfont
             $(K\cw K,\alpha_{\cw})$};
   \node(D2)[below of=D1, node distance=1.7cm]{\fontsize{9.5}{5}\selectfont$(L\cw L,\alpha_{\cw})$};

   \node(B3)[below of=B2, node distance=1.7cm]{\fontsize{9.5}{5}\selectfont$\hspace{-1cm}([M,K],\alpha_{[M,K]})$};
   \node(C3)[below of=C2, node distance=1.7cm]{\fontsize{9.5}{5}\selectfont$([K,K],\alpha_{[K,K]})$};
   \node(D3)[below of=D2, node distance=1.7cm]{\fontsize{9.5}{5}\selectfont$([L,L],\alpha_{[L,L]})$,};

   \draw[>->](B1) to node{\fontsize{7.5}{5}\selectfont$~$}(C1);
   \draw[->](C1) to node{\fontsize{7.5}{5}\selectfont$\vartheta|$}(D1);

   \draw[>->](B2) to node{\fontsize{7.5}{5}\selectfont$~$}(C2);
   \draw[->>](C2) to node{\fontsize{7.5}{5}\selectfont$\vartheta$}(D2);

   \draw[>->](B3) to node{\fontsize{7.5}{5}\selectfont$~$}(C3);
   \draw[->>](C3) to node{\fontsize{7.5}{5}\selectfont$~$}(D3);

   \draw[>->](B1) to node[right]{\fontsize{7.5}{5}\selectfont$~$}(B2);
   \draw[>->](C1) to node[right]{\fontsize{7.5}{5}\selectfont$~$}(C2);
   \draw[>->](D1)to node[right]{\fontsize{7.5}{5}\selectfont$~$}(D2);

   \draw[->>](B2) to node[right]{\fontsize{7.5}{5}\selectfont$~$}(B3);
   \draw[->>](C2) to node[right]{\fontsize{7.5}{5}\selectfont$~$}(C3);
   \draw[>->>](D2)to node[right]{\fontsize{7.5}{5}\selectfont$~$}(D3);
\end{tikzpicture}\\
where the columns and rows are exact. One easily sees from the
above diagram that $\vartheta|$ is surjective.
\end{proof}
Using the above theorem and the explanations at the beginning
of this section, we have the following conjecture.\vspace{.15cm}\\
{\bf\large Conjecture.} With the assumptions of Theorem 4.6,
$H_3^\alpha(F)=0$ and there is an isomorphism of Hom-vector spaces
$H_3^{\alpha}(L)\cong(\ker(S\cw F\lo
F),\alpha_{\cw}|)$.\vspace{.15cm}

Finally, we close this section by examining the behavior of
functors $J_2^{\alpha}(-)$ and $H_2^{\alpha}(-)$ to the direct sum
of Hom-Lie algebras.
\begin{theorem}
Let $(L_1,\alpha_{L_1})$ and $(L_2,\alpha_{L_2})$ be two Hom-Lie
algebras with surjective endomorphisms $\alpha_{L_1}$ and
$\alpha_{L_2}$. Then there are isomorphisms of Hom-vector
spaces\vspace{-.1cm}
\[J_2^\alpha(L_1\oplus L_2)\cong J_2^\alpha(L_1)\oplus
J_2^\alpha(L_2)\oplus(L_1^{ab}\otimes L_2^{ab},
\alpha_{\otimes})\oplus(L_2^{ab}\otimes L_1^{ab},
\alpha_{\otimes}),\vspace{-.2cm}\]
\[\hspace{-2.95cm}H_2^{\alpha}(L_1\oplus L_2)\cong H_2^{\alpha}(L_1)\oplus H_2^{\alpha}(L_2)
\oplus(L_1^{ab}\otimes L_2^{ab},
\alpha_{\otimes}).\vspace{-.1cm}\]
\end{theorem}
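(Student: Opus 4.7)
The plan is to first decompose the Hom-Lie tensor and exterior products of $L = L_1 \oplus L_2$ as direct sums of simpler pieces, and then read off the kernels of the commutator maps.

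First, I would apply Lemma 3.2 to the split short exact sequence $(L_2, \alpha_{L_2}) \rightarrowtail (L, \alpha_L) \twoheadrightarrow (L_1, \alpha_{L_1})$ (split by the inclusion $L_1 \hookrightarrow L$) to obtain a split short exact sequence
\[
(L_2 \cw L, \alpha_\cw) \rightarrowtail (L \cw L, \alpha_\cw) \twoheadrightarrow (L_1 \cw L_1, \alpha_\cw).
\]
Iterating this argument on the remaining factors, and carrying out an analogous analysis for $\star$ based on Proposition 3.1(iv), I expect to assemble the Hom-Lie algebra decompositions
\[
(L \star L, \alpha_\star) \cong (L_1 \star L_1) \oplus (L_2 \star L_2) \oplus (L_1 \star L_2) \oplus (L_2 \star L_1),
\]
\[
(L \cw L, \alpha_\cw) \cong (L_1 \cw L_1) \oplus (L_2 \cw L_2) \oplus (L_1 \cw L_2),
\]
where the mixed pieces are formed by viewing $L_1$ and $L_2$ as ideals (hence crossed modules) in $L$. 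Brackets between distinct summands vanish by relation (A6) combined with $[L_1, L_2] = 0$.

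Next, since $L_1$ and $L_2$ commute in $L$, the induced Hom-actions on each other are trivial; together with surjectivity of $\alpha_{L_1}$ and $\alpha_{L_2}$, Proposition 3.1(iii) yields
\[
(L_1 \star L_2, \alpha_\star) \cong (L_1^{ab} \otimes L_2^{ab}, \alpha_\otimes), \quad (L_2 \star L_1, \alpha_\star) \cong (L_2^{ab} \otimes L_1^{ab}, \alpha_\otimes).
\]
For the exterior analog, the subspace $L_1 \square L_2$ vanishes: its generators $l_1 \star l_2$ require $\partial_1(l_1) = \partial_2(l_2)$ in $L$, forcing $l_1, l_2 \in L_1 \cap L_2 = 0$. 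Hence $L_1 \cw L_2 \cong L_1 \star L_2 \cong (L_1^{ab} \otimes L_2^{ab}, \alpha_\otimes)$. Under the above decompositions, the commutator Hom-maps $\lambda_L : L \star L \to L$ and $\lambda_L^\cw : L \cw L \to L$ restrict on each diagonal summand $L_i \star L_i$ (resp. $L_i \cw L_i$) to the commutator Hom-map of $(L_i, \alpha_{L_i})$, and vanish on every mixed summand, again because $[L_1, L_2] = 0$. Taking kernels immediately yields the stated formula for $J_2^\alpha(L_1 \oplus L_2)$, and invoking Theorem 4.1 (which identifies $H_2^\alpha$ with $\ker \lambda^\cw$) then gives the formula for $H_2^\alpha(L_1 \oplus L_2)$.

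The main obstacle is rigorously justifying the direct-sum decompositions of the first step within the semi-abelian category of Hom-Lie algebras. The cleanest route I foresee is to exploit the Hom-Lie algebra projections $\pi_i : L \to L_i$, which are available precisely because the sum is direct, together with the functoriality of $\star$ and $\cw$ in each argument; combined with the splittings furnished by Lemma 3.2, these should provide the mutually compatible sections and retractions needed to assemble each summand and to verify the required bracket vanishing between summands.
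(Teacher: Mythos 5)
Your overall architecture coincides with the paper's: reduce both formulas to direct-sum decompositions of $(L\star L,\alpha_{\star})$ and $(L\cw L,\alpha_{\cw})$ for $L=L_1\oplus L_2$, identify the mixed pieces with $(L_1^{ab}\otimes L_2^{ab},\alpha_{\otimes})$ and $(L_2^{ab}\otimes L_1^{ab},\alpha_{\otimes})$ via Proposition 3.1(iii) (using $[L_1,L_2]=0$ and the surjectivity hypotheses), observe that the commutator Hom-maps vanish on the mixed pieces and restrict to $\lambda_{L_i}$ on the diagonal ones, and invoke Theorem 4.1 for the homology statement. Those parts, including the remark that $L_1\square L_2=0$ because $L_1\cap L_2=0$, are correct and match the paper.

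The gap is exactly where you flag it, and the tools you cite do not close it. Lemma 3.2 applied to $(L_2,\alpha_{L_2})\rightarrowtail(L,\alpha_L)\twoheadrightarrow(L_1,\alpha_{L_1})$ yields only the (split) extension $(L_2\cw L,\alpha_{\cw})\rightarrowtail(L\cw L,\alpha_{\cw})\twoheadrightarrow(L_1\cw L_1,\alpha_{\cw})$; it cannot be ``iterated'' to split the mixed term, since $L_2\cw L$ is not the exterior square of the middle term of an extension to which the lemma applies. Likewise Proposition 3.1(iv) gives only right-exactness, so it supplies neither the injectivity nor the splitting you need. What is actually required is the binary decomposition $(K\star L,\alpha_{\star})\cong((K\star L_1)\oplus(K\star L_2),\alpha_{\oplus})$ for an ideal $(K,\alpha_K)$ of $(L,\alpha_L)$, which the paper proves by hand: the Hom-map $\varphi$, $k\star(l_1,l_2)\mapsto (k\star l_1,\,k\star l_2)$, is checked against the defining relations (A1)--(A6) (this is where $[L_1,L_2]=0$ enters), and an inverse is assembled from the maps induced by the inclusions $L_i\hookrightarrow L$; applying this isomorphism twice gives the four-term decomposition of $L\star L$, and the exterior statement follows by noting $\varphi(L\square L)=(L_1\square L_1)\oplus(L_2\square L_2)$. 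Your closing suggestion (projections plus functoriality) is the right idea in spirit, but $\star$ and $\cw$ are not functorial in one variable for arbitrary homomorphisms---the Hom-actions change when $L$ is replaced by $L_1$---so the well-definedness of $k\star l\mapsto k\star\pi_i(l)$ must be verified on the relations; until that verification (or an equivalent argument) is supplied, the decompositions, and hence both stated formulas, remain unproved.
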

\begin{proof}
we only need to prove that\vspace{-.1cm}
\begin{equation}
\hspace{.2cm}((L_1\oplus L_2)\star(L_1\oplus
L_2),\alpha_{\star})\cong(L_1\star
L_1,\alpha_{\star})\oplus(L_2\star
L_2,\alpha_{\star})\oplus(L_1^{ab}\otimes
L_2^{ab},\alpha_{\otimes}) \oplus(L_2^{ab}\otimes
L_1^{ab},\alpha_{\otimes}),\vspace{-.2cm}
\end{equation}
\begin{equation}
\hspace{-3.8cm}((L_1\oplus L_2)\cw(L_1\oplus
L_2),\alpha_{\cw})\cong(L_1\cw L_1,\alpha_{\cw})\oplus(L_2\cw
L_2,\alpha_{\cw})\oplus(L_1^{ab}\otimes
L_2^{ab},\alpha_{\otimes}).\vspace{-.1cm}
\end{equation}
Put $(L,\alpha_L)=(L_1\oplus L_2,\alpha_{\oplus})$, and identify
$(L_1,\alpha_{L_1})$ and $(L_2,\alpha_{L_2})$ with their images in
$(L,\alpha_L)$. Then $[L_1,L_2]=0$ and so, invoking Proposition
3.1$(iii)$, $(L_1\star L_2,\alpha_{\star})\cong(L_1^{ab}\otimes
L_2^{ab},\alpha_{\otimes})$ and $(L_2\star
L_1,\alpha_{\star})\cong(L_2^{ab}\otimes
L_1^{ab},\alpha_{\otimes})$. We claim that for any ideal
$(K,\alpha_K)$ of $(L,\alpha_L)$, there is an isomorphism of
Hom-Lie algebras\vspace{-.2cm}
\begin{equation}
(K\star L,\alpha_{\star})\cong((K\star L_1) \oplus(K\star
L_2),\alpha_{\oplus}).\vspace{-.2cm}
\end{equation}
Define the Hom-map $\varphi:(K\star L,\alpha_{\star})\lo((K\star
L_1)\oplus(K\star L_2),\alpha_{\oplus})$,
$(k\star(l_1,l_2))\longmapsto((k\star l_1),(k\star l_2))$. A page
of routine calculations shows that $\varphi$ preserves the
defining relations of the exterior product and also, $\varphi
\circ\alpha_{\star}=\alpha_{\oplus}\circ\varphi$. On the other
hand, the inclusions of $L_1$ and $L_2$ into $L$ yield linear
Hom-maps $(K\star L_1,\alpha_{\star})\lo (K\star
L,\alpha_{\star})$ and $(K\star L_2,\alpha_{\star})\lo (K\star
L,\alpha_{\star})$, which combine to give an inverse $\psi$ of
$\varphi$. It therefore follows that $\varphi$ is an isomorphism
of Hom-Lie algebras, as claimed. We now get the isomorphism $(7)$
by applying the isomorphism $(9)$ twice.

For the isomorphism $(8)$, it is enough to note that for the
isomorphism $\varphi$ obtained in proving $(7)$, we have
$\varphi((L_1\oplus L_2)\square(L_1\oplus L_2))=(L_1\square
L_1)\oplus(L_2\square L_2)$, which deduces the result.
\end{proof}
The following corollary generalizes a result due to Simson and Tye
\cite{ST}.
\begin{corollary}
Let $(V_1,\alpha_{V_1})$ and $(V_2,\alpha_{V_2})$ be two
Hom-vector spaces with surjective endomorphisms $\alpha_{V_1}$ and
$\alpha_{V_2}$. Then there is an isomorphism of Hom-vector
spaces\vspace{-.15cm}
\[(\Gamma(V_1\oplus V_2),\alpha_{\Gamma})\cong(\Gamma(V_1)\oplus\Gamma(V_2)
\oplus(V_1\otimes V_2),\alpha_{\oplus}).\vspace{-.15cm}\]
\end{corollary}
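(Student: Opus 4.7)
The plan is to view $(V_1,\alpha_{V_1})$ and $(V_2,\alpha_{V_2})$ as abelian Hom-Lie algebras, so that $V_i^{ab}=V_i$ and all the surjectivity hypotheses required by Proposition 3.5 and Theorem 4.7 are automatically satisfied. Writing $(V,\alpha_V)=(V_1\oplus V_2,\alpha_\oplus)$, Proposition 3.5 supplies the short exact sequence of Hom-vector spaces $(\Gamma(V),\alpha_\Gamma)\rightarrowtail(V\star V,\alpha_\star)\twoheadrightarrow(V\cw V,\alpha_\cw)$, which reduces the corollary to identifying the kernel of this natural surjection with $(\Gamma(V_1)\oplus\Gamma(V_2)\oplus(V_1\otimes V_2),\alpha_\oplus)$ as a Hom-vector space.

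To carry this out I would invoke the isomorphisms (7) and (8) of Theorem 4.7, which (since $V_i^{ab}=V_i$) identify $(V\star V,\alpha_\star)$ with $(V_1\star V_1)\oplus(V_2\star V_2)\oplus(V_1\otimes V_2)\oplus(V_2\otimes V_1)$ and $(V\cw V,\alpha_\cw)$ with $(V_1\cw V_1)\oplus(V_2\cw V_2)\oplus(V_1\otimes V_2)$, both as Hom-vector spaces. By naturality of the surjection $\star\to\cw$, together with the componentwise formula of the isomorphism (9) constructed inside the proof of Theorem 4.7, the map $V\star V\twoheadrightarrow V\cw V$ decomposes under these identifications as the natural surjection $V_i\star V_i\twoheadrightarrow V_i\cw V_i$ on each diagonal summand and, on the cross summand, as the map $(V_1\otimes V_2)\oplus(V_2\otimes V_1)\to V_1\otimes V_2$ given by $(x,y)\mapsto x-\sigma(y)$, where $\sigma$ denotes the canonical swap (the sign reflects the skew relation $v_2\cw v_1=-v_1\cw v_2$ in the exterior product).

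The diagonal kernels are $\Gamma(V_i)$ by Proposition 3.5 applied to each $V_i$, and the cross-term kernel is the antidiagonal $\{(x,\sigma^{-1}(x)):x\in V_1\otimes V_2\}$, which is isomorphic as a Hom-vector space to $(V_1\otimes V_2,\alpha_\otimes)$ via projection onto the first coordinate (compatibility with the endomorphism holds because $\sigma$ intertwines the two $\alpha_\otimes$'s). Assembling the three kernel contributions yields the asserted decomposition of $(\Gamma(V),\alpha_\Gamma)$. The step requiring the most care is the verification that the natural surjection $V\star V\twoheadrightarrow V\cw V$ really splits in the described way under the identifications of Theorem 4.7; this reduces to the explicit componentwise definition of isomorphism (9) and to the naturality of the tensor-to-exterior quotient in each variable, so no new machinery is needed beyond what has already been developed.
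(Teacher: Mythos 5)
Your proof is correct and takes essentially the same route as the paper, whose entire argument is that the corollary ``follows immediately from Theorem 4.7 and Proposition 3.5'': you view the $(V_i,\alpha_{V_i})$ as abelian Hom-Lie algebras, identify $(\Gamma(V),\alpha_\Gamma)$ with the kernel of $(V\star V,\alpha_\star)\twoheadrightarrow(V\cw V,\alpha_\cw)$ via Proposition 3.5, and decompose that kernel using the isomorphisms $(7)$--$(9)$ of Theorem 4.7. Your explicit treatment of the cross-term kernel as the (anti)diagonal copy of $V_1\otimes V_2$ simply spells out what the paper leaves implicit in its computation of $\varphi(L\square L)$.
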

\begin{proof}
It follows immediately from Theorem 4.7 and Proposition 3.5.
\end{proof}
\section{The capability of Hom-Lie algebras}
In this section, we investigate some of the applications of
exterior product for developing the theory of capability of
Hom-Lie algebras. We first recall from \cite{CG1} that:\\
$\bullet$\hspace{.3cm} A Hom-Lie algebra $(L,\alpha_L)$ is said to
be {\it capable} if there exists a Hom-Lie algebra $(K,\alpha_K)$
such that $(L,\alpha_L)\cong(K/Z_{\alpha}(K),\bar\alpha_K)$. When
$\alpha_L=id_L$, this definition recovers the notion of capable
Lie algebra in \cite{SAM}.\\
$\bullet$\hspace{.3cm} Let $(L,\alpha_L)$ be any Hom-Lie algebra.

$(a)$ The {\it tensor centre} $Z^{\star}_{\alpha}(L)$ of
$(L,\alpha_L)$ is defined to be the vector space\vspace{-.2cm}
\[~~~~~~~~~Z^{\star}_{\alpha}(L)=\{x\in L~|~\alpha^k_L(x)\star l
=0_{L\star L},~{\rm for~all}~l\in L, k\geq0\}.\vspace{-.2cm}\]

$(b)$ The {\it exterior centre} $Z^{\cw}_{\alpha}(L)$ of
$(L,\alpha_L)$ is defined to be the vector space\vspace{-.2cm}
\[~~~~~~~~~~Z^{\cw}_{\alpha}(L)=\{x\in L~|~\alpha^k_L(x)\cw l
=0_{L\cw L},~{\rm for~all}~l\in L, k\geq0\}.\vspace{-.2cm}\]
Plainly, $Z^{\star}_{\alpha}(L)\subseteq Z^{\cw}_{\alpha}(L)$, and
the equality holds whenever $(L,\alpha_L)$ is perfect, by
Proposition 3.4.
\begin{example}
$(i)$ Let $(L,\alpha_L)$ be an abelian Hom-Lie algebra with linear
basis $\{e_i~|~i\in I\}$, where $I$ is a well-ordered set.
Consider the Hom-Lie algebra $(K,\alpha_K)$ with linear basis
$\{e_i, e_{jk}~|~i,j,k\in I, j<k\}$, product given by
$[e_j,e_k]=e_{jk}$ for all $j<k$, and zero elsewhere, and the
endomorphism $\alpha_K$ is defined as $\alpha_K(e_i)=\alpha_L(e_i)$
and $\alpha_K(e_{jk})=[\alpha_K(e_j),\alpha_K(e_k)]$. Since all
triple products of elements in $K$ are zero, one sees that
$(K,\alpha_K)$ satisfies the conditions of a Hom-Lie algebra.
Furthermore, $Z_{\alpha}(K)=span\{e_{jk}~|~j<k\}$, and
$(Z_{\alpha}(K),\alpha_{Z_{\alpha}(K)})\rightarrowtail(K,\alpha_K)
\stackrel{\pi}{\twoheadrightarrow}(L,\alpha_L)$ is an extension of
Hom-Lie algebras, where $\pi$ is defined by $\pi(e_i)=e_i$ and
$\pi(e_{jk})=0$. Hence $(L,\alpha_L)$ is capable.

$(ii)$ Let $(L,\alpha_L)$ be a Hom-Lie algebra with $\alpha_L=0$.
We claim that $(L,\alpha_L)$ is capable. The result is clear when
$Z_{\alpha}(L)=0$ or $(L,\alpha_L)$ is abelian. Choose a linear
basis $\{e_i~|~i\in I\}$ for $Z_{\alpha}(L)$ and extend it to a
linear basis $\{e_i, f_j~|~i\in I,j\in J\}$ for $L$, where $I$ and
$J$ are non-empty well-defined ordered sets. Take the vector space
$K$ with a linear basis $\{e_i, t_i, f_j~|~i\in I, j\in J\}$ (in
fact, $K$ is a vector space direct sum of $L$ and the space
generated by the set $\{t_i~|~i\in I\}$), together with the
following product: $[e_i,f_{j_0}]=t_i$ for some fixed $j_0\in J$
and all $i\in I$, $[f_j,f_k]$ is the same in $L$ for all $j,k\in
J$, and zero elsewhere. Then $K$ with companion endomorphism
$\alpha_K=0$ is a Hom-Lie algebra such that
$Z_\alpha(K)=span\{t_i~|~i\in I\}$ and
$(K/Z_{\alpha}(K),\bar\alpha_K)\cong(L,\alpha_L)$, as claimed.

$(iii)$ $(iii)$ Consider the three-dimensional Hom-Lie algebra
$(L,\alpha_L)$ with linear basis $\{e_1,e_2,e_3\}$, the product
given by $[e_1,e_2]=e_3$ (unwritten products are equal to zero),
and the non-surjective endomorphism $\alpha_L$ defined by
$\alpha_L(e_1)=e_3$, $\alpha_L(e_2)=e_2$, $\alpha_L(e_3)=0$.
Evidently, $Z_\alpha(L)=L^2=span\{e_3\}$. Let $(K,\alpha_K)$ be
the four-dimensional Hom-Lie algebra with linear basis
$\{f_1,f_2,f_3,f_4\}$, the product given by $[f_1,f_2]=f_3$,
$[f_3,f_1]=f_4$, and the endomorphism defined by
$\alpha_K(f_1)=f_3$, $\alpha_K(f_2)=f_2$,
$\alpha_K(f_3)=\alpha_K(f_4)=0$. Then $Z_\alpha(K)=span\{f_4\}$
and $(K/Z_{\alpha}(K),\bar\alpha_K)\cong(L,\alpha_L)$, that is,
$(L,\alpha_L)$ is capable.
\end{example}
The following proposition plays an important role in obtaining the
results of this section.
\begin{proposition} Let $(L,\alpha_L)$ be a Hom-Lie algebra. Then:

$(i)$ Both
$(Z^{\star}_{\alpha}(L),\alpha_{Z^{\star}_{\alpha}(L)})$ and
$(Z^{\cw}_{\alpha}(L),\alpha_{Z^{\cw}_{\alpha}(L)})$ are central
ideals of $(L,\alpha_L)$.

$(ii)$ $(L,\alpha_L)$ is capable if and only if
$Z^{\cw}_{\alpha}(L)=0$.

$(iii)$ If $(R,\alpha_R)\rightarrowtail(F,\alpha_F)\stackrel{\pi}
{\twoheadrightarrow}(L,\alpha_L)$ is a free presentation of
$(L,\alpha_L)$, then
$Z^{\cw}_{\alpha}(L)=\bar\pi(Z_\alpha(F/[R,F]))$, where
$\bar{\pi}:(F/[R,F],\bar{\alpha}_F)\lo(L,\alpha_L)$ is the
homomorphism induced by $\pi$.

$(iv)$ If $\alpha_L$ is a surjective endomorphism, then
$Z^{\star}_{\alpha}(L)$ is contained in $[L,L]$.

$(v)$ $(Z^{\cw}_{\alpha}(L),\alpha_{Z^{\cw}_{\alpha}(L)})$ is the
smallest central subalgebra containing all central subalgebras
$(N,\alpha_N)$ for which the canonical homomorphism
$H_2^{\alpha}(L)\lo H_2^{\alpha}(L/N)$ is a monomorphism $($or
equivalently, for which the canonical homomorphism $(L\cw
L,\alpha_{\cw})\lo(L/N\cw L/N,\alpha_{\cw})$ is an isomorphism$)$.

$(vi)$ $(Z^{\star}_{\alpha}(L),\alpha_{Z^{\star}_{\alpha}(L)})$ is
the smallest central subalgebra containing all central subalgebras
$(N,\alpha_N)$ for which the canonical homomorphism
$J_2^{\alpha}(L)\lo J_2^{\alpha}(L/N)$ is a monomorphism $($or
equivalently, for which the canonical homomorphism $(L\star
L,\alpha_{\star})\lo(L/N\star L/N,\alpha_{\star})$ is an
isomorphism$)$.
\end{proposition}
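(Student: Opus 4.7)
I would treat parts (i), (iii), and (ii) together in that order. Part (i) is immediate: applying the commutator Hom-maps $\lambda_L:(L\star L,\alpha_{\star})\to(L,\alpha_L)$ and $\lambda_L:(L\cw L,\alpha_{\cw})\to(L,\alpha_L)$ to $\alpha_L^k(x)\star l=0$ and $\alpha_L^k(x)\cw l=0$ yields $Z^{\star}_{\alpha}(L)\subseteq Z^{\cw}_{\alpha}(L)\subseteq Z_{\alpha}(L)$, while closure under $\alpha_L$ is immediate from shifting $k\mapsto k+1$ in the quantifier. For (iii), the plan is to invoke Corollary 4.4, which supplies an isomorphism $(L\cw L,\alpha_{\cw})\cong([F,F]/[R,F],\bar\alpha_{[F,F]})$ carrying $\pi(x)\cw\pi(y)$ to $[x,y]+[R,F]$; under this identification, $\pi(x)\in Z^{\cw}_{\alpha}(L)$ reads precisely as $[\alpha_F^k(x),y]\in[R,F]$ for every $y\in F$ and $k\geq 0$, i.e.\ $\bar{x}\in Z_{\alpha}(F/[R,F])$. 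Part (ii) then splits into two halves: if $Z^{\cw}_{\alpha}(L)=0$, (iii) forces $Z_{\alpha}(F/[R,F])\subseteq R/[R,F]$, and the reverse inclusion is automatic from the $\alpha$-stability and centrality of $R/[R,F]$ in $F/[R,F]$, giving $(F/[R,F])/Z_{\alpha}(F/[R,F])\cong L$. Conversely, given $L\cong K/Z_{\alpha}(K)$ via $\phi$, Lemma 3.3 produces $\psi:(L\cw L,\alpha_{\cw})\to(K,\alpha_K)$ with $\phi\circ\psi=\lambda_L$, and for any preimage $x\in K$ of $\bar{x}\in Z^{\cw}_{\alpha}(L)$ and any $y\in K$, the identity $\psi(\alpha_L^k(\bar{x})\cw\phi(y))=[\alpha_K^k(x),y]$ (which is legitimate modulo $Z_{\alpha}(K)$-ambiguity, since $Z_{\alpha}(K)$ is central) equals $0$; this forces $x\in Z_{\alpha}(K)$ and hence $\bar{x}=0$.

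For (iv), I would apply Proposition 3.5, which under the surjectivity of $\alpha_L$ yields an injective homomorphism $\psi:(\Gamma(L^{ab}),\alpha_{\Gamma})\rightarrowtail(L\star L,\alpha_{\star})$ with $\psi(\gamma(\bar{x}))=x\star x$. If $x\notin[L,L]$ then $\bar{x}\neq 0$ in $L^{ab}$, so $\gamma(\bar{x})\neq 0$ in $\Gamma(L^{ab})$, and injectivity of $\psi$ gives $x\star x=\alpha_L^0(x)\star x\neq 0_{L\star L}$, placing $x$ outside $Z^{\star}_{\alpha}(L)$.

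Parts (v) and (vi) are parallel; I argue (v) in detail and obtain (vi) by replacing $\cw$ by $\star$ and using Proposition 3.1(iv) directly in place of the exterior analogue (2). Two directions are needed. First, if $(N,\alpha_N)$ is a central subalgebra for which $(L\cw L,\alpha_{\cw})\to(L/N\cw L/N,\alpha_{\cw})$ is an isomorphism, then for $x\in N$, $l\in L$ and $k\geq 0$ the generator $\alpha_L^k(x)\cw l$ maps to $0$ and is therefore $0$ in $L\cw L$, so $N\subseteq Z^{\cw}_{\alpha}(L)$. Second, the exact sequence $(2)$ applied to $Z^{\cw}_{\alpha}(L)\rightarrowtail L\twoheadrightarrow L/Z^{\cw}_{\alpha}(L)$ reads $(Z^{\cw}_{\alpha}(L)\cw L,\alpha_{\cw})\to(L\cw L,\alpha_{\cw})\twoheadrightarrow(L/Z^{\cw}_{\alpha}(L)\cw L/Z^{\cw}_{\alpha}(L),\alpha_{\cw})$, in which the first arrow vanishes because $z\cw l=0$ for $z\in Z^{\cw}_{\alpha}(L)$ (the $k=0$ case of the definition); hence the second arrow is an isomorphism and $Z^{\cw}_{\alpha}(L)$ itself has the stated property. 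Combining the two directions gives the minimality assertion. To match the equivalent formulation in terms of $H_2^{\alpha}(L)\to H_2^{\alpha}(L/N)$ being a monomorphism, I would chase the commutative diagram whose rows are the short exact sequences $H_2^{\alpha}(L)\hookrightarrow L\cw L\twoheadrightarrow[L,L]$ from Theorem 4.1 and its $L/N$ counterpart: when $N\subseteq Z(L)$, the image of $N\cw L\to L\cw L$ sits inside $H_2^{\alpha}(L)$ (since $[n,l]=0$) and vanishes in $H_2^{\alpha}(L/N)$, so monomorphy forces this image to be zero, which via $(2)$ upgrades to the desired iso.

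The main obstacle I anticipate is proving the equivalence of the two parenthetical formulations in (v) and (vi) cleanly (it is where the centrality hypothesis on $N$ really bites); the rest of the proof is essentially a careful synthesis of Corollary 4.4, Lemma 3.3, Proposition 3.5, Theorem 4.1, and the exact sequence $(2)$ already developed in the paper.
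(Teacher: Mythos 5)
Your proposal is correct, and it differs from the paper mainly in scope rather than in substance. For parts (i)--(iii) the paper gives no argument at all, merely citing \cite{CG1}; you reprove them inside the paper's own framework -- (iii) via the identification $(L\cw L,\alpha_{\cw})\cong([F,F]/[R,F],\bar\alpha_{[F,F]})$ of Corollary 4.4, and (ii) by combining (iii) with Lemma 3.3 applied to the central extension $Z_{\alpha}(K)\rightarrowtail K\twoheadrightarrow K/Z_{\alpha}(K)$ -- and these arguments are sound and have the virtue of being self-contained. For (iv) the paper argues directly with the composite $(L\star L,\alpha_{\star})\lo(L^{ab}\star L^{ab},\bar\alpha_{\star})\cong(L^{ab}\otimes L^{ab},\bar\alpha_{\otimes})$ and the nonvanishing of $\bar x\otimes\bar y$ for $x\notin[L,L]$; your detour through the injectivity of $\psi:(\Gamma(L^{ab}),\alpha_{\Gamma})\rightarrowtail(L\star L,\alpha_{\star})$ from Proposition 3.5 and the identity $x\star x=\psi(\gamma(\bar x))$ is equivalent (Proposition 3.5 is itself proved via that very composite), but you should record why $\gamma(\bar x)\neq 0$ when $\bar x\neq 0$, e.g.\ by producing a quadratic form on $L^{ab}$ that is nonzero at $\bar x$ and invoking the universal property of $\Gamma$. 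For (v)--(vi) your two-direction chase is exactly the paper's argument: the paper assembles sequence $(2)$ and Theorem 4.1 into a commutative diagram with exact rows and columns and reads the claim off it, and, like the paper, you dispose of (vi) by ``replace $\cw$ by $\star$''; the one extra point worth making explicit there is that, by Proposition 3.1$(iv)$, the kernel of $(L\star L,\alpha_{\star})\lo(L/N\star L/N,\alpha_{\star})$ is generated by elements of both forms $n\star l$ and $l\star n$, which is handled by the standard involution $x\star y\mapsto -y\star x$ of the tensor square.
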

\begin{proof}
The parts $(i)$-$(iii)$ are found in \cite{CG1}.

$(iv)$ Consider the composite homomorphism $(L\star
L,\alpha_{\star})\stackrel{\rho}{\lo}(L^{ab}\star
L^{ab},\bar\alpha_{\star})\stackrel{\phi}{\lo}(L^{ab}\otimes
L^{ab},\bar\alpha_{\otimes})$, where $\rho$ is the natural
surjective homomorphism and $\phi$ is the isomorphism given in
Proposition 3.1$(iii)$. If there is some $x\in L-[L,L]$, then
$\bar x\otimes\bar y\neq 0$ for some $y\in L$, implying that the
pre-image of this element under $\phi\circ\rho$ can not be
vanished, that is, $x\star y\neq 0$ in $L\star L$. This means that
$Z^\star_\alpha(L)\subseteq[L,L]$.

$(v)$ The result immediately follows from the following
commutative diagram of Hom-Lie algebras\vspace{.1cm}

\tikzset{node distance=3cm, auto}$~~~~$
\begin{tikzpicture}[%
back line/.style={densely dotted}, cross
line/.style={preaction={draw=white, -,line width=6pt}}]
   \hspace{1cm}\node(A1){\fontsize{9.5}{5}\selectfont$~$};
   \node[right of=A1](B1){\fontsize{9.5}{5}\selectfont$\hspace{.cm}(N\cw L,\alpha_{\cw})$};
   \node[right of=B1](C1){\fontsize{9.5}{5}\selectfont$H_2^{\alpha}(L)$};
   \node[right of=C1](D1){\fontsize{9.5}{5}\selectfont$H_2^{\alpha}(\ds\f{L}{N})$};

   \node(B2)[below of=B1, node distance=1.5cm]{\fontsize{9.5}{5}\selectfont $\hspace{.cm}(N\cw
             L,\alpha_{\cw})$};
   \node(C2)[below of=C1, node distance=1.5cm]{\fontsize{9.5}{5}\selectfont $(L\cw
             L,\alpha_{\cw})$};
             \node(D2)[below of=D1, node distance=1.5cm]
            {\fontsize{9.5}{5}\selectfont$(\ds\f{L}{N}\cw\f{L}{N},\alpha_{\cw})$};

   \node(B3)[below of=B2, node distance=1.5cm]{\fontsize{9.5}{5}\selectfont $~$};
   \node(C3)[below of=C2, node distance=1.5cm]{\fontsize{9.5}{5}\selectfont
            $([L,L],\alpha_{[L,L]})$};
   \node(D3)[below of=D2, node distance=1.5cm]
     {\fontsize{9.5}{5}\selectfont$(\ds[\f{L}{N},\f{L}{N}],\alpha_{[\f{L}{N},\f{L}{N}]})$,};

   \draw[->](B1) to node{\fontsize{7.5}{5}\selectfont$~$}(C1);
   \draw[->](C1) to node{\fontsize{7.5}{5}\selectfont$~$}(D1);

   \draw[->](B2) to node{\fontsize{7.5}{5}\selectfont$~$}(C2);
   \draw[->>](C2) to node{\fontsize{7.5}{5}\selectfont$~$}(D2);

  \draw[->>](C3) to node{\fontsize{7.5}{5}\selectfont$~$}(D3);

  \draw[>->>](B1) to node[right]{\fontsize{7.5}{5}\selectfont$=$}(B2);
  \draw[>->](C1) to node[right]{\fontsize{7.5}{5}\selectfont$~$}(C2);
  \draw[>->](D1)to node[right]{\fontsize{7.5}{5}\selectfont$~$}(D2);

\draw[->>](C2) to node[right]{\fontsize{7.5}{5}\selectfont$~$}(C3);
\draw[->>](D2)to node[right]{\fontsize{7.5}{5}\selectfont$~$}(D3);
\end{tikzpicture}\\
where, by the sequence $(2)$ and Theorem 4.1, the rows and columns
are exact.

The proof of $(vi)$ is similar to that of $(v)$.
\end{proof}
We have the following consequences, which are of interest in their
own account.
\begin{corollary}
Let $(N,\alpha_N)$ be a central subalgebra of a finite dimensional
Hom-Lie algebra $(L,\alpha_L)$. Then $N\subseteq
Z^{\cw}_{\alpha}(L)$ if and only if
$\dim(H_2^{\alpha}(L/N))=\dim(H_2^{\alpha}(L))+\dim(N\cap[L,L])$
$($as vector spaces$)$.
\end{corollary}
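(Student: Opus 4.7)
The plan is to feed the central extension $(N,\alpha_N) \rightarrowtail (L,\alpha_L) \twoheadrightarrow (L/N,\bar\alpha_L)$ into the eight-term sequence $(5)$ of Theorem~4.6. Since $N$ is central we have $[N,L]=0$, so the term $N/[N,L]$ collapses to $N$ itself, and the tail of the sequence reads
\[
H_2^\alpha(L) \stackrel{f}\lo H_2^\alpha(L/N) \stackrel{g}\lo (N,\alpha_N) \stackrel{h}\lo H_1^\alpha(L) \lo H_1^\alpha(L/N) \lo 0.
\]

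My first task would be to identify the map $h$ explicitly. Tracing through the snake-lemma construction in the proof of Theorem~4.6 (applied to its first commutative diagram), one finds that $h$ is induced by the inclusion $N \hookrightarrow L$ composed with the quotient $L \twoheadrightarrow L/[L,L]=H_1^\alpha(L)$; hence $\ker h = N\cap[L,L]$. By exactness, $\Im g = N\cap[L,L]$, and the exact fragment $H_2^\alpha(L) \stackrel{f}\lo H_2^\alpha(L/N) \stackrel{g}\twoheadrightarrow N\cap[L,L]$ then yields, by finite-dimensionality, the dimension count
\[
\dim H_2^\alpha(L/N) \;=\; \dim H_2^\alpha(L) - \dim\ker f + \dim(N\cap[L,L]).
\]
Consequently the numerical identity in the statement holds if and only if $\ker f = 0$, that is, if and only if $f$ is injective.

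The remaining step is to convert injectivity of $f$ into the inclusion $N\subseteq Z^{\cw}_{\alpha}(L)$. This is precisely the content of Proposition~5.2(v), which characterizes $(Z^{\cw}_{\alpha}(L),\alpha_{Z^{\cw}_{\alpha}(L)})$ as the largest central subalgebra for which the canonical homomorphism $H_2^\alpha(L) \lo H_2^\alpha(L/N)$ is a monomorphism. Splicing this equivalence to the previous paragraph gives the desired biconditional.

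The only delicate point in this plan is the explicit identification of the connecting map $h$ as the natural map $N \hookrightarrow L \twoheadrightarrow L^{ab}$; once that is verified by unwinding the snake-lemma definition, everything else reduces to a direct dimension count and a single appeal to Proposition~5.2(v).
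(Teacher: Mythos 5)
Your proposal is correct and follows essentially the same route as the paper: the paper likewise feeds the central extension $(N,\alpha_N)\rightarrowtail(L,\alpha_L)\twoheadrightarrow(L/N,\bar\alpha_L)$ into the eight-term sequence of Theorem 4.6, extracts the exact fragment ending in $(N\cap[L,L],\alpha_{N\cap[L,L]})$, performs the dimension count, and then invokes Proposition 5.2$(v)$ to translate injectivity of $H_2^{\alpha}(L)\lo H_2^{\alpha}(L/N)$ into $N\subseteq Z^{\cw}_{\alpha}(L)$. Your only addition is making explicit the identification of the map $N\lo H_1^{\alpha}(L)$ produced by the Snake Lemma (induced on cokernels by the inclusion, so with kernel $N\cap[L,L]$), a point the paper leaves implicit.
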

\begin{proof}
The centrality of $(N,\alpha_N)$ together with Theorem 4.6 implies
an exact sequence\vspace{-.2cm}
\[(N\cw K,\alpha_{\cw})\lo H_2^{\alpha}(\f{L}{N})\lo
H_2^{\alpha}(L)\twoheadrightarrow(N\cap[L,L],\alpha_{N\cap[L,L]}).\vspace{-.2cm}\]The
result now follows from Proposition 5.2$(v)$.
\end{proof}
\begin{corollary}
For any Hom-Lie algebra $(L,\alpha_L)$ with surjective
endomorphism $\alpha_L$,\vspace{-.2cm}
\[Z^{\star}_{\alpha}(L)=Z^{\cw}_{\alpha}(L)\cap[L,L].\vspace{-.2cm}\]
\end{corollary}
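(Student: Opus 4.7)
The plan is to verify the two inclusions separately. The inclusion $Z^{\star}_{\alpha}(L)\subseteq Z^{\cw}_{\alpha}(L)\cap[L,L]$ is essentially free: the inclusion $Z^{\star}_{\alpha}(L)\subseteq Z^{\cw}_{\alpha}(L)$ is recorded just before Example 5.1 (it comes directly from the definition of $L\cw L$ as a quotient of $L\star L$), and $Z^{\star}_{\alpha}(L)\subseteq [L,L]$ is precisely Proposition 5.2(iv), which uses the surjectivity of $\alpha_L$. So this direction is free.

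For the reverse inclusion, I would fix $x\in Z^{\cw}_{\alpha}(L)\cap[L,L]$, together with arbitrary $l\in L$ and $k\geq 0$, and try to show $\alpha^k_L(x)\star l=0$ in $L\star L$. The hypothesis $x\in Z^{\cw}_{\alpha}(L)$ says that $\alpha^k_L(x)\cw l=0$ in $L\cw L$, so the element $\alpha^k_L(x)\star l$ lies in the kernel of the canonical projection $(L\star L,\alpha_\star)\twoheadrightarrow(L\cw L,\alpha_\cw)$. By Proposition 3.5 (which requires $\alpha_L$ surjective) that kernel equals the image of $\psi$, namely $L\square L$. Thus $\alpha^k_L(x)\star l\in L\square L$.

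At this point I would apply the isomorphism $\rho:(L\square L,\alpha_\square)\to(L^{ab}\square L^{ab},\bar\alpha_\square)$ of Corollary 3.6, sending $\alpha^k_L(x)\star l\longmapsto \overline{\alpha^k_L(x)}\star\bar l$. Since $\alpha_L$ preserves $[L,L]$, the hypothesis $x\in[L,L]$ propagates to $\alpha^k_L(x)\in[L,L]$, so $\overline{\alpha^k_L(x)}=0$ in $L^{ab}$, and hence $\rho(\alpha^k_L(x)\star l)=0$. Injectivity of $\rho$ then forces $\alpha^k_L(x)\star l=0$ in $L\star L$, which, being true for every $l\in L$ and every $k\geq 0$, yields $x\in Z^{\star}_{\alpha}(L)$.

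There is no real obstacle; the argument is essentially a diagram chase through the sequence of Proposition 3.5 and the isomorphism of Corollary 3.6. The only delicate point is tracking the hypothesis that $\alpha_L$ is surjective: it is used twice, once in Proposition 5.2(iv) for the easy inclusion and once in Proposition 3.5 (hence in Corollary 3.6) for the hard inclusion, and both usages are already built into the cited results.
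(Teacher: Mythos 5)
Your proposal is correct and follows essentially the same route as the paper: the easy inclusion via Proposition 5.2(iv) together with $Z^{\star}_{\alpha}(L)\subseteq Z^{\cw}_{\alpha}(L)$, and the hard inclusion by noting $\alpha_L^k(x)\star l\in L\square L$ and killing it with the isomorphism $\rho$ of Corollary 3.6 using $\overline{\alpha_L^k(x)}=0$ in $L^{ab}$. (Your appeal to Proposition 3.5 to identify the kernel of $L\star L\twoheadrightarrow L\cw L$ is harmless but unnecessary, since that kernel is $L\square L$ by definition of the exterior product; surjectivity of $\alpha_L$ enters only through Corollary 3.6 and Proposition 5.2(iv), exactly as in the paper.)
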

\begin{proof}
Let $x\in Z^{\cw}_{\alpha}(L)\cap[L,L]$. Then
$\alpha^k(x)\in[L,L]$ and $\alpha^k(x)\star l\in L\square L$ for
all $l\in L$ and $k\geq0$, implying from Corollary 3.6 that
$\alpha^k(x)\star l=0$. Hence $x\in Z^{\star}_{\alpha}(L)$. The
inverse containment follows from Proposition 5.2$(iv)$ and the
fact that $Z^{\star}_{\alpha}(L)\subseteq Z^{\cw}_{\alpha}(L)$.
\end{proof}
If $\alpha_L=id_L$, Corollaries 5.3 and 5.4 reduce to
\cite[Theorem 4.4]{SAM} and \cite[Corollary 2.7]{N}, respectively.
\begin{corollary}
Let $(L,\alpha_L)$ be a perfect Hom-Lie algebra with surjective
endomorphism $\alpha_L$. Then $(L,\alpha_L)$ is capable if and
only if $Z(L)=0$.
\end{corollary}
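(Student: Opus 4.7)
The plan is to treat the two directions separately, building on the equivalence ``$(L,\alpha_L)$ capable $\Longleftrightarrow Z^{\cw}_\alpha(L)=0$'' from Proposition~5.2(ii), together with the automatic inclusion $Z^{\cw}_\alpha(L)\subseteq Z_\alpha(L)=Z(L)$ (the last equality because $\alpha_L$ is surjective, by Subsection~2.1(ii)).

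The reverse direction is immediate: if $Z(L)=0$ then $Z_\alpha(L)=0$, and $L\cong L/Z_\alpha(L)$ witnesses capability with $K=L$. For the forward direction, assume $L$ is capable; then $Z^{\cw}_\alpha(L)=0$ and it suffices to prove the reverse inclusion $Z(L)\subseteq Z^{\cw}_\alpha(L)$. Fix $z\in Z(L)$, $l\in L$ and $k\geq 0$; the goal is to show $\alpha_L^k(z)\cw l=0$ in $L\cw L$. Using perfectness, write $l=\sum_i[a_i,b_i]$; the exterior analogue of relation~(A4) applied to $(m,m',n)=(z,a_i,b_i)$ yields
\[
[z,a_i]\cw\alpha_L(b_i)=\alpha_L(z)\cw[a_i,b_i]-\alpha_L(a_i)\cw[z,b_i].
\]
Since $z\in Z(L)$, both $[z,a_i]$ and $[z,b_i]$ vanish, forcing $\alpha_L(z)\cw[a_i,b_i]=0$. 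Replacing $z$ by $\alpha_L^{k-1}(z)\in Z(L)$ in this identity dispatches every $k\geq 1$.

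The remaining case $k=0$ is the main obstacle, because a preimage of $z$ under $\alpha_L$ is not a priori central, so the direct substitution above breaks down. My plan there is to invoke Proposition~5.2(v) with $N=Z(L)$: it is enough to show that the canonical homomorphism $(L\cw L,\alpha_\cw)\to(L/Z(L)\cw L/Z(L),\alpha_\cw)$ is an isomorphism. Both source and target are universal central extensions by Proposition~3.4, since the perfect quotient $L/Z(L)$ also inherits a surjective endomorphism from $\alpha_L$. The key ingredient is to verify that $\lambda_L^{-1}(Z(L))$ is central in $L\cw L$: the identity ${}^{\lambda_L(x)}x'=[\alpha_\cw(x),x']$ from Proposition~3.1(ii) (applied to $\cw$), combined with the fact that $Z(L)$ acts trivially on $L\cw L$, gives $\alpha_\cw(x)\in Z(L\cw L)$ for $x\in\lambda_L^{-1}(Z(L))$; the surjectivity of $\alpha_\cw$ inherited from that of $\alpha_L$ then promotes this to $\lambda_L^{-1}(Z(L))\subseteq Z(L\cw L)$. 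The desired isomorphism finally follows from the uniqueness of universal central extensions of the perfect quotient $L/Z(L)$, completing the containment $Z(L)\subseteq Z^{\cw}_\alpha(L)=0$.
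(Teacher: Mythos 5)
Your reverse direction and your $k\geq 1$ computation are fine (note only that you should justify $\alpha_L(Z(L))\subseteq Z(L)$, which does hold here because $\alpha_L$ is surjective). The genuine gap is exactly where you locate the crux, the case $k=0$. The step ``$\alpha_{\cw}(x)\in Z(L\cw L)$ for all $x\in\lambda_L^{-1}(Z(L))$, and surjectivity of $\alpha_{\cw}$ promotes this to $\lambda_L^{-1}(Z(L))\subseteq Z(L\cw L)$'' is a non sequitur: surjectivity of $\alpha_{\cw}$ only lets you write an arbitrary element of $L\cw L$ as $\alpha_{\cw}(y)$, which controls brackets $[\alpha_{\cw}(y),x]={}^{\lambda_L(y)}x$, not $[x,x']$ for your fixed $x$; you would need $\alpha_{\cw}$ to map $\lambda_L^{-1}(Z(L))$ \emph{onto} itself, which is not available (a preimage under $\alpha_L$ of a central element need not be central). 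Worse, the claim is essentially circular: by the exterior analogue of relation (A6), $[m\cw n,m'\cw n']=[m,n]\cw[m',n']$, so for $x=\sum_i m_i\cw n_i$ with $\lambda_L(x)=z\in Z(L)$ one gets $[x,m'\cw n']=z\cw[m',n']$; hence ``$\lambda_L^{-1}(Z(L))$ is central in $L\cw L$'' is equivalent (using perfectness) to the very statement $z\cw l=0$ you are trying to prove for $k=0$. In addition, even granting that centrality, invoking ``uniqueness of universal central extensions'' requires showing that the composite $L\cw L\to L/Z(L)$ is a \emph{universal} central extension of $L/Z(L)$, not merely a central one, and in the Hom-Lie setting this is not automatic (composites of central extensions need not be central; this is why $\alpha$-central notions appear in the literature), and one must also check that the resulting isomorphism is the canonical map before Proposition 5.2(v) applies.

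The paper closes this case by a much shorter route, which you could adopt to repair the proof: since $Z(L)$ and $L$ act trivially on each other, Proposition 3.1(iii) gives $(Z(L)\star L,\alpha_{\star})\cong(Z(L)\otimes L^{ab},\alpha_{\otimes})=0$ because $L$ is perfect, hence $(Z(L)\cw L,\alpha_{\cw})=0$; the exact sequence (2) for $Z(L)\rightarrowtail L\twoheadrightarrow L/Z(L)$ then shows that $(L\cw L,\alpha_{\cw})\to(L/Z(L)\cw L/Z(L),\alpha_{\cw})$ is injective (equivalently $H_2^{\alpha}(L)\to H_2^{\alpha}(L/Z(L))$ is a monomorphism), and Proposition 5.2(v) yields $Z(L)\subseteq Z^{\cw}_{\alpha}(L)$ in one stroke, covering all $k\geq0$ simultaneously; combining with 5.2(i)--(ii) finishes both directions. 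Your direct computation for $k\geq1$ is correct but is subsumed by this argument.
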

\begin{proof}
The surjectivity of $\alpha_L$ yields that $Z_\alpha(L)=Z(L)$. We
have $(Z(L)\star L,\alpha_{\star})\cong(Z(L)\otimes
L^{ab},\alpha_{\otimes})=0$, thanks to Proposition 3.1$(iii)$.
Consequently $(Z(L)\cw L,\alpha_{\cw})=0$, forcing the natural
homomorphism $H_2^{\alpha}(L)\lo H_2^{\alpha}(L/Z(L))$ is
injective. Therefore, the parts $(ii)$ and $(v)$ of Proposition
5.2 imply the result.
\end{proof}
In the following, we try to omit the condition of surjectivity in
Corollary 5.5.
\begin{proposition}
A perfect Hom-Lie algebra $(L,\alpha_L)$ is capable if and only if
$Z_{\alpha}(L)\subseteq\ker(\alpha_L)$.
\end{proposition}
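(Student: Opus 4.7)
The proof splits naturally into two implications.

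For the forward direction, suppose $L\cong K/Z_\alpha(K)$ for some Hom-Lie algebra $K$. Take $\bar z\in Z_\alpha(L)$ with lift $z\in K$; then $[\alpha_K^k(z),K]\subseteq Z_\alpha(K)$ for all $k\geq 0$. Using perfectness of $L$, $K=[K,K]+Z_\alpha(K)$, so any $y\in K$ can be written $y=\sum_i[a_i,b_i]+w$ with $w\in Z_\alpha(K)$. The Hom-Jacobi identity rewrites
\[
[\alpha_K^{k+1}(z),[a_i,b_i]]=-[\alpha_K(a_i),[b_i,\alpha_K^k(z)]]-[\alpha_K(b_i),[\alpha_K^k(z),a_i]],
\]
and the inner brackets $[b_i,\alpha_K^k(z)]$ and $[\alpha_K^k(z),a_i]$ lie in the central ideal $Z_\alpha(K)$, so this expression vanishes. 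Combined with $[\alpha_K^{k+1}(z),w]=0$, this gives $\alpha_K(z)\in Z_\alpha(K)$, so $\alpha_L(\bar z)=0$, establishing $Z_\alpha(L)\subseteq\ker\alpha_L$.

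For the backward direction, by Proposition 5.2(ii) it suffices to show $Z^\cw_\alpha(L)=0$ under the hypothesis. The plan is to prove the equality $Z^\cw_\alpha(L)=\alpha_L(Z_\alpha(L))$ for any perfect Hom-Lie algebra; combined with the hypothesis $\alpha_L(Z_\alpha(L))=0$, this yields $Z^\cw_\alpha(L)=0$. One inclusion is direct: for $u\in Z_\alpha(L)\subseteq Z(L)$ and $l=\sum_i[a_i,b_i]\in L=[L,L]$, relation (A5) gives
\[
\alpha_L(u)\cw[a_i,b_i]=[b_i,u]\cw\alpha_L(a_i)-[a_i,u]\cw\alpha_L(b_i)=0,
\]
since $u\in Z(L)$, and iterating with $\alpha_L^k(u)\in Z(L)$ shows $\alpha_L(u)\in Z^\cw_\alpha(L)$; hence $\alpha_L(Z_\alpha(L))\subseteq Z^\cw_\alpha(L)$.

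The main obstacle is the reverse inclusion $Z^\cw_\alpha(L)\subseteq\alpha_L(Z_\alpha(L))$. I would pursue this through the isomorphism $L\cw L\cong[F,F]/[R,F]$ from Corollary 4.4, for a free presentation $R\rightarrowtail F\twoheadrightarrow L$: given $y\in Z^\cw_\alpha(L)$ with lift $\tilde y\in F$, the condition $y\cw l=0$ for all $l\in L$ translates to $[\tilde y,F]\subseteq[R,F]$, while $y\in Z_\alpha(L)\subseteq\ker\alpha_L$ gives $\alpha_F(\tilde y)\in R$. Perfectness lets us write $\tilde y=a+r$ with $a\in[F,F]$ and $r\in R$. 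The technical crux is to construct $\tilde x\in F$ with $\alpha_F(\tilde x)\equiv a\pmod{R}$ whose image in $L$ lies in $Z_\alpha(L)$, which would give $y\in\alpha_L(Z_\alpha(L))$; I expect this to follow from a careful analysis of how the constraints $[a,F]\subseteq[R,F]$ and $\alpha_F(a)\in R$ force the existence of such a lift via the free Hom-Lie structure of $F$.
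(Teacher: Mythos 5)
Your forward implication is correct and complete: the Hom--Jacobi computation inside $K$, together with $K=[K,K]+Z_\alpha(K)$ (which does follow from perfectness of $L\cong K/Z_\alpha(K)$) and the fact that $Z_\alpha(K)\subseteq Z(K)$, gives $\alpha_K(z)\in Z_\alpha(K)$ and hence $\alpha_L(\bar z)=0$. This is a legitimately different route from the paper, which instead shows $\alpha_L(x)\in Z^{\cw}_{\alpha}(L)$ and invokes Proposition 5.2(ii); your version is more elementary in that it never leaves $K$.

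The backward implication, however, has a genuine gap. You reduce it to the inclusion $Z^{\cw}_{\alpha}(L)\subseteq\alpha_L(Z_\alpha(L))$ for perfect $(L,\alpha_L)$, but you do not prove that inclusion: the sketch via Corollary 4.4 ends with ``I expect this to follow from a careful analysis,'' and the step you call the crux is precisely where the difficulty lives. In the only case relevant to the proposition, namely $\alpha_L(Z_\alpha(L))=0$, your claimed inclusion says exactly that $Z^{\cw}_{\alpha}(L)=0$, which by Proposition 5.2(ii) is equivalent to the capability you are trying to establish; so the reduction buys nothing. Concretely, your plan requires producing $\tilde x\in F$ with $\alpha_F(\tilde x)\equiv a\pmod{R}$, which already presupposes $y\in\Im(\alpha_L)$; nothing in the constraints $[a,F]\subseteq[R,F]$ and $\alpha_F(a)\in R$ forces an element of $Z^{\cw}_{\alpha}(L)$ to lie in the image of $\alpha_L$, and $\alpha_F$ on a free Hom-Lie algebra is far from surjective. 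The paper avoids this entirely: assuming $\alpha_L(Z_\alpha(L))=0$, it exhibits capability constructively by adapting Example 5.1(ii) --- take a basis $\{e_i\}$ of $Z_\alpha(L)$, form $K=L\dot{+}\,\mathrm{span}\{t_i\}$ with $[e_i,f_{j_0}]=t_i$, and set $\alpha_K|_L=\alpha_L$, $\alpha_K(t_i)=0$; the hypothesis $Z_\alpha(L)\subseteq\ker(\alpha_L)$ is exactly what makes $\alpha_K$ multiplicative, and then $Z_\alpha(K)=\mathrm{span}\{t_i\}$ with $(K/Z_\alpha(K),\bar\alpha_K)\cong(L,\alpha_L)$. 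To repair your proof, replace the unproved inclusion by such an explicit construction (or supply an actual proof that $\alpha_L(Z_\alpha(L))=0$ forces $Z^{\cw}_{\alpha}(L)=0$).
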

\begin{proof}
We can assume $Z_\alpha(L)\neq0$. If there is $x\in Z_{\alpha}(L)$
such that $\alpha_L(x)\neq0$, then $\alpha_L^k(x)\cw[a,b]=0$ for
all $a,b\in L$ and $k\geq 1$. The perfectness of $(L,\alpha_L)$
yields that $\alpha_L(x)\in Z_\alpha^\cw(L)$, and then
$(L,\alpha_L)$ is not capable, thanks to Proposition 5.2(ii). We
now suppose $\alpha_L(Z_{\alpha}(L))=0$ and consider the algebra
$K$ introduced in Example 5.1$(ii)$. We define the endomorphism
$\alpha_K$ such that $\alpha_K|_{L}=\alpha_L$ and
$\alpha_K|_{span\{t_i|i\in I\}}=0$. It is straightforward to check
that $(K,\alpha_K)$ is a Hom-Lie algebra with
$Z_{\alpha}(L)=span\{t_i~|~i\in I\}$ and
$(K/Z_{\alpha}(K),\bar\alpha_K)\cong(L,\alpha_L)$.
\end{proof}
The following theorem shows that for deciding on the property of
capability, we may restrict ourselves to Hom-Lie algebras with
surjective endomorphisms.
\begin{theorem}
Let $(L,\alpha_L)$ be a non-perfect Hom-Lie algebra with
non-surjective endomorphism. Then $(L,\alpha_L)$ is capable.
\end{theorem}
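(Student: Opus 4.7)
The strategy is to apply Proposition~5.2$(ii)$, which reduces capability of $(L,\alpha_L)$ to showing $Z^{\cw}_{\alpha}(L)=0$, equivalently producing a Hom-Lie algebra $(K,\alpha_K)$ with $(K/Z_{\alpha}(K),\bar\alpha_K)\cong(L,\alpha_L)$. My construction will generalise Example~5.1$(ii)$--$(iii)$: I adjoin to $L$ a vector space $T$ of central ``companions'' indexed by a basis of $Z_{\alpha}(L)$, and twist the bracket on $K=L\oplus T$ by a carefully chosen $T$-valued $2$-cocycle whose rôle is to make each nonzero element of $Z_{\alpha}(L)$ fail to be central in $K$. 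The two hypotheses enter only in choosing a distinguished element $v_0$: because $[L,L]$ and $\alpha_L(L)$ are both proper subspaces of $L$, and a vector space is never the union of two proper subspaces over any field, there exists $v_0\in L\setminus([L,L]\cup\alpha_L(L))$; by an extra adjustment (translating $v_0$ by a suitable element of $Z_{\alpha}(L)$) we may further arrange that $v_0$ sits in a linear complement $W$ of $Z_{\alpha}(L)$ in $L$.

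Concretely, fix a basis $\{z_i\}_{i\in I}$ of $Z_{\alpha}(L)$, extend it through $v_0$ and a basis of the remaining part of $W$ to a basis of $L$, let $T:=\bigoplus_{i\in I}\F t_i$, and set $K:=L\oplus T$ as a vector space. Define $c\colon L\times L\lo T$ to be the bilinear skew-symmetric map sending $(z_i,v_0)\mapsto -t_i$ and vanishing on every other pair of basis vectors, and put
\[[l_1+s_1,l_2+s_2]_K:=[l_1,l_2]_L+c(l_1,l_2),\qquad \alpha_K|_L:=\alpha_L,\qquad \alpha_K|_T:=0.\]
Skew-symmetry of the new bracket is automatic, the Hom-Jacobi identity for $K$ reduces to the vanishing of $\sum_{\mathrm{cyc}}c(\alpha_L(l_1),[l_2,l_3]_L)$, and multiplicativity $\alpha_K([a,b]_K)=[\alpha_K(a),\alpha_K(b)]_K$ reduces to $c(\alpha_L(l_1),\alpha_L(l_2))=0$; both hold because $c$ is supported on pairs involving $v_0$, while $\alpha_L(l_i)\in\alpha_L(L)$ and $[l_2,l_3]_L\in[L,L]$ have zero $v_0$-coordinate by the choice of $v_0$.

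It remains to verify $Z_{\alpha}(K)=T$. Clearly $T\subseteq Z_{\alpha}(K)$. Conversely, for $l\in L\subseteq K$ that is $\alpha$-central in $K$, writing $l=\sum\lambda_i z_i+\mu v_0+\sum\mu_j w_j'$, the identity $[l,v_0]_K=[l,v_0]_L-\sum\lambda_i t_i$ forces $\lambda_i=0$ for all $i$, then $[l,z_k]_K=\mu t_k$ forces $\mu=0$, and the surviving $L$-bracket conditions together with $\alpha_K^n(l)\in Z(K)$ for all $n\geq 0$ push $l$ into $Z_{\alpha}(L)\cap W=0$. Hence $(K/Z_{\alpha}(K),\bar\alpha_K)\cong(L,\alpha_L)$, proving capability. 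The main obstacle is the subspace gymnastics needed to produce $v_0$ simultaneously avoiding $[L,L]$, $\alpha_L(L)$, and $Z_{\alpha}(L)$, together with the careful bookkeeping of coordinates in the chosen basis; once $v_0$ is fixed, the verifications reduce to the clean observation that $c$ only detects the $v_0$-coordinate of its arguments, which vanishes identically on both $\alpha_L(L)$ and $[L,L]$.
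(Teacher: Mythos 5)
Your construction is exactly the paper's: pick $v_0=x$ outside $[L,L]\cup\Im(\alpha_L)$ using the fact that a vector space is not the union of two proper subspaces, adjoin a central space $T$ indexed by a basis of $Z_{\alpha}(L)$, declare $[z_i,v_0]=\pm t_i$ and zero elsewhere, set $\alpha_K|_L=\alpha_L$, $\alpha_K|_T=0$, and identify $(K/Z_{\alpha}(K),\bar\alpha_K)\cong(L,\alpha_L)$; your verification of $Z_{\alpha}(K)=T$ is fine. The genuine gap is in the decisive step where you check that $(K,\alpha_K)$ is a Hom-Lie algebra. You correctly reduce Hom-Jacobi and multiplicativity to the vanishing of $\sum_{\mathrm{cyc}}c(\alpha_L(l_1),[l_2,l_3])$ and of $c(\alpha_L(l_1),\alpha_L(l_2))$, but then assert that elements of $\alpha_L(L)$ and $[L,L]$ ``have zero $v_0$-coordinate by the choice of $v_0$.'' This does not follow: $v_0\notin[L,L]\cup\alpha_L(L)$ only says $v_0$ is not an element of either subspace, whereas killing the $v_0$-coordinate on both requires choosing a complement of $\F v_0$ containing $[L,L]+\alpha_L(L)$, i.e.\ $v_0\notin[L,L]+\alpha_L(L)$. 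The union argument produces an element outside the union, not outside the sum, and the hypotheses (non-perfect, non-surjective) do not prevent $[L,L]+\alpha_L(L)=L$. (A smaller quibble of the same kind: translating $v_0$ by an element of $Z_{\alpha}(L)$ to place it in a chosen complement $W$ may throw it back into $[L,L]$ or $\alpha_L(L)$.)

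The step really can fail. Take $L=\mathfrak{sl}_2\oplus\F s$ over a field of characteristic zero, with $s$ central and $\alpha_L(p+\lambda s)=\lambda s$; since $\Im(\alpha_L)$ is central, multiplicativity and the Hom-Jacobi identity for $L$ are immediate, $L$ is non-perfect, $\alpha_L$ is non-surjective, $Z_{\alpha}(L)=\F s$, yet $[L,L]+\alpha_L(L)=L$. Every admissible $v_0$ has the form $p+\lambda s$ with $p\neq0\neq\lambda$; taking $v_0=e+s$ and the basis $\{s,v_0,f,h\}$, the cyclic sum for $(l_1,l_2,l_3)=(s,h,v_0)$ equals $c(s,[h,v_0])=c(s,2e)=c(s,2v_0-2s)=\mp2t_1\neq0$, the other two terms vanishing because $\alpha_L(h)=0$ and $[s,h]=0$; so your $K$ violates the Hom-Jacobi identity, and a dimension count shows no choice of $v_0$ or of the basis extension repairs this. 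To be fair, the paper's own proof leans on the very same assertion (that $x$ ``does not appear'' in the brackets occurring in the Hom-Jacobi identity), so you have faithfully reproduced its strategy; but as written your key claim is not a consequence of the stated choice of $v_0$ --- it would need the stronger condition $v_0\notin[L,L]+\alpha_L(L)+Z_{\alpha}(L)$, which is unavailable in general --- and the proof is incomplete at exactly this point.
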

\begin{proof}
Owing to Example 5.1$(i)$, we can assume that $(L,\alpha_L)$ is
non-abelian. If $L-[L,L]\subseteq\Im(\alpha_L)$, then
$L=[L,L]\cup\Im(\alpha_L)$. But this implies that $(L,\alpha_L)$
is perfect or $\alpha_L$ is surjective, contradicting the
assumptions. So, there is an element $x\in
L-([L,L]\cup\Im(\alpha_L))$ with
$x\in$\hspace{-.2cm}$/~Z_\alpha(L)$. Choose a linear basis
$\{e_i~|~i\in I\}$ for $Z_{\alpha}(L)$ and extend it to a linear
basis $\{e_i, f_j~|~i\in I,j\in J\}$ for $L$ containing $x$, where
$I$ and $J$ are non-empty sets. Take the algebra $K=L\dot{+}T$,
where $T$ is generated by the set $\{t_i~|~i\in I\}$, together
with the following product: $[e_i,x]=t_i$ for all $i\in I$,
$[f_j,f_k]$ is the same in $L$ for all $j,k\in J$, and zero
elsewhere. Define $\alpha_K$ such that $\alpha_K|_L=\alpha_L$ and
$\alpha_K(t_i)=0$ for $i\in I$. Since $x\not\in\Im(\alpha_K)$, $x$
does not appear in the brackets of the form $[\alpha_K(a),b]$,
where $a\in K$, $b\in[K,K]$. Hence, for all $i\in I$, the elements
$t_i$ are never vanished by force the Hom-Jacobi identity. On the
other hand, we have $[K,t_i]=0$ for any $i\in I$. Therefore,
$(K,\alpha_K)$ is a Hom-Lie algebra,
$Z_{\alpha}(K)=span\{t_i~|~i\in I\}$ and
$(K/Z_{\alpha}(K),\bar\alpha_K)\cong(L,\alpha_L)$.
\end{proof}
It is obvious that if the Hom-Lie algebras $(L_1,\alpha_{L_1})$
and $(L_2,\alpha_{L_2})$ are capable, then $(L_1\oplus
L_2,\alpha_{\oplus})$ is capable, because
$Z^{\cw}_{\alpha}(L_1\oplus L_2)\subseteq
Z^{\cw}_{\alpha}(L_1)\oplus Z^{\cw}_{\alpha}(L_2)$. In the
following, we prove the converse of this result, under some
conditions.
\begin{theorem}
Let $(L,\alpha_L)$ be a finite dimensional capable regular Hom-Lie
algebra such that $(L,\alpha_L)=(L_1\oplus L_2,\alpha_{\oplus})$.
Then $(L_1,\alpha_{L_1})$ and $(L_2,\alpha_{L_2})$ are capable.
\end{theorem}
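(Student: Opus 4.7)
The plan is to translate capability into the condition $Z^\cw_\alpha(L) = 0$ via Proposition 5.2(ii) and exploit the direct-sum decomposition of the exterior square furnished by Theorem 4.7. Since $(L, \alpha_L)$ is regular, $\alpha_{L_1}$ and $\alpha_{L_2}$ are both surjective, so Theorem 4.7 gives
\[(L \cw L, \alpha_{\cw}) \cong (L_1 \cw L_1,\alpha_{\cw}) \oplus (L_2 \cw L_2,\alpha_{\cw}) \oplus (L_1^{ab} \otimes L_2^{ab}, \alpha_{\otimes}),\]
under which $(x,0) \cw (l_1, l_2)$ corresponds to $(x \cw l_1,\; 0,\; \bar x \otimes \bar l_2)$. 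I will prove $Z^\cw_\alpha(L_1) = 0$; the argument for $(L_2,\alpha_{L_2})$ is symmetric.

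Take $x \in Z^\cw_\alpha(L_1)$; the natural attempt is to show that $(x, 0) \in L$ lies in $Z^\cw_\alpha(L)$ and is therefore zero. For every $(l_1, l_2) \in L$ and $k \ge 0$, the first two components of $\alpha_L^k(x,0) \cw (l_1, l_2)$ vanish immediately: the $L_1 \cw L_1$ piece equals $\alpha_{L_1}^k(x) \cw l_1 = 0$ by hypothesis, while the $L_2 \cw L_2$ piece is trivially zero. The cross piece $\overline{\alpha_{L_1}^k(x)} \otimes \bar l_2 \in L_1^{ab} \otimes L_2^{ab}$ vanishes for all $l_2$ iff $L_2$ is perfect or $\overline{\alpha_{L_1}^k(x)} = 0$ in $L_1^{ab}$. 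In the perfect case we conclude directly $(x, 0) \in Z^\cw_\alpha(L) = 0$, whence $x = 0$. Otherwise, bijectivity of $\alpha_{L_1}$ (which preserves $[L_1, L_1]$) gives $x \in [L_1, L_1]$, so $x \in Z^\cw_\alpha(L_1) \cap [L_1, L_1] = Z^\star_\alpha(L_1)$ by Corollary 5.4; at minimum this yields $Z^\star_\alpha(L_1) = 0$.

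The main obstacle is upgrading $Z^\star_\alpha(L_1) = 0$ to the full statement $Z^\cw_\alpha(L_1) = 0$ when $L_2$ is not perfect. Here I would invoke Proposition 5.2(v): take any central subalgebra $(N, \alpha_N)$ of $L_1$ with $H_2^\alpha(L_1) \hookrightarrow H_2^\alpha(L_1/N)$ (so $N \subseteq Z^\cw_\alpha(L_1)$), view $N$ as a central ideal of $L$ through the inclusion $L_1 \hookrightarrow L$, and compare the induced maps by applying Theorem 4.7 to both $L = L_1 \oplus L_2$ and $L/N = (L_1/N) \oplus L_2$. Via the Hopf formula (Corollary 4.5) and the direct-sum decomposition of $H_2^\alpha$, injectivity of $H_2^\alpha(L) \to H_2^\alpha(L/N)$ reduces to simultaneous injectivity of $H_2^\alpha(L_1) \to H_2^\alpha(L_1/N)$ (which holds by Proposition 5.2(v) applied in $L_1$) and of $L_1^{ab} \otimes L_2^{ab} \to (L_1/N)^{ab} \otimes L_2^{ab}$. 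Establishing the second injectivity is the delicate step, and it is here that the finite-dimensionality of $L$ and the regularity of $\alpha_L$ should enter to force $N \subseteq [L_1, L_1]$. Once in place, Proposition 5.2(v) and capability of $L$ yield $N \subseteq Z^\cw_\alpha(L) = 0$, whence $Z^\cw_\alpha(L_1) = 0$ as required.
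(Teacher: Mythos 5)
Your overall strategy---translate capability into $Z^\cw_\alpha(L)=0$ and exploit the direct-sum decompositions of Theorem 4.7 together with Proposition 5.2 and Corollary 5.3---is the same toolkit the paper uses, but your proposal contains a genuine gap that you flag and never close, and it is precisely the heart of the matter. The obstruction is the cross term $L_1^{ab}\otimes L_2^{ab}$: from $x\in Z^\cw_\alpha(L_1)$ you have no control over $\overline{\alpha_{L_1}^k(x)}\otimes\bar l_2$ unless you already know $\alpha_{L_1}^k(x)\in[L_1,L_1]$, and nothing in your argument supplies this. The sentence ``bijectivity of $\alpha_{L_1}$ (which preserves $[L_1,L_1]$) gives $x\in[L_1,L_1]$'' is unjustified: Proposition 5.2(iv) places only the \emph{tensor} centre inside $[L,L]$, and Corollary 5.4 ($Z^\star_\alpha=Z^\cw_\alpha\cap[L,L]$) shows the exterior centre need not lie in the derived subalgebra. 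Your second attempt runs into the same wall: the injectivity of $L_1^{ab}\otimes L_2^{ab}\lo (L_1/N)^{ab}\otimes L_2^{ab}$ is exactly the statement $N\subseteq[L_1,L_1]$, which you only express the hope of proving from finite-dimensionality and regularity. What you have actually established is the case $L_2$ perfect and the weaker fact $Z^\star_\alpha(L_1)=0$; the theorem itself remains unproved.

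The missing idea is a structural reduction carried out \emph{before} any $H_2^\alpha$-comparison, and it is how the paper's proof closes exactly this gap. By \cite[Theorem 3.8]{PNP}, each non-abelian factor decomposes as $(L_i,\alpha_{L_i})=(T_i\oplus A_i,\alpha_{\oplus})$ with $(A_i,\alpha_{A_i})$ abelian and $Z(T_i)\subseteq[T_i,T_i]$; since abelian Hom-Lie algebras are capable (Example 5.1(i)), $Z^\cw_\alpha(A_i)=0$, and a dimension count using Theorem 4.7 and Corollary 5.3 gives $Z^\cw_\alpha(L_i)=Z^\cw_\alpha(T_i)$. After this reduction one has $Z^\cw_\alpha(T_i)\subseteq Z(T_i)\subseteq[T_i,T_i]$, so the cross term you were fighting vanishes automatically, and a second application of the same dimension count (comparing $H_2^\alpha(L)$ with $H_2^\alpha(L/Z^\cw_\alpha(T_i))$, with $H_2^\alpha$ of direct sums computed by Theorem 4.7) yields $Z^\cw_\alpha(T_i)\subseteq Z^\cw_\alpha(L)=0$, whence $Z^\cw_\alpha(L_i)=0$. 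Without this reduction---or some other argument forcing the exterior centre of each factor into its derived subalgebra---your plan cannot be completed as written.
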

\begin{proof}
We only need to prove that $Z^{\cw}_{\alpha}(L_1\oplus
L_2)=Z^{\cw}_{\alpha}(L_1)\oplus Z^{\cw}_{\alpha}(L_2)$. By
Example 5.1$(i)$, we can assume that $(L,\alpha_L)$ is
non-abelian. For convenience, we divide the rest of the proof into
three steps.

{\it Stem} $1$. Here we prove that if $(L_i,\alpha_{L_i})$ is
non-abelian, then $(L_i,\alpha_{L_i})=(T_i\oplus
A_i,\alpha_{\oplus})$ such that $(A_i,\alpha_{A_i})$ is an abelian
Hom-Lie algebra and $Z^{\cw}_{\alpha}(L_i)=Z^{\cw}_{\alpha}(T_i)$,
for $i=1,2$.

It follows from \cite[Theorem 3.8]{PNP} that
$(L_i,\alpha_{L_i})=(T_i\oplus A_i,\alpha_{\oplus})$ in which
$(A_i,\alpha_{A_i})$ is abelian and $[L_i,L_i]\cap
Z(L_i)=Z(T_i)\subseteq[T_i,T_i]$. By Example 5.1$(i)$,
$(A_i,\alpha_{A_i})$ is capable and then
$Z^{\cw}_{\alpha}(A_i)=0$, implying that
$Z^{\cw}_{\alpha}(L_i)\subseteq Z^{\cw}_{\alpha}(T_i)$. We now
claim that $Z^{\cw}_{\alpha}(T_i)\subseteq Z^{\cw}_{\alpha}(L_i)$.
By virtue of Theorem 4.7 and Corollary 5.3, we have\vspace{-.2cm}
\begin{alignat*}{1}
\dim(H_2^{\alpha}(L_i))&=\dim(H_2^{\alpha}(T_i))+
\dim(H_2^{\alpha}(A_i))+\dim((T_i^{ab}\otimes A_i)\oplus
(A_i\otimes T_i^{ab}))\\
&=\dim(H_2^{\alpha}(\f{T_i}{Z^{\cw}_{\alpha}(T_i)}))-\dim(Z^{\cw}_{\alpha}(T_i))+
\dim(H_2^{\alpha}(A_i))\\
&+\dim((T_i^{ab}\otimes A_i)\oplus
(A_i\otimes T_i^{ab}))\\
&=\dim(H_2^{\alpha}(\f{L_i}{Z^{\cw}_{\alpha}(T_i)}))-\dim(Z^{\cw}_{\alpha}(T_i))
\end{alignat*}
which, using again Corollary 5.3, gives the required result.

{\it Stem} $2$. Here we prove that if the Hom-Leibniz algebras
$(L_i,\alpha_{L_i})$, $i=1,2$, are both non-abelian and
$Z(L_i)\subseteq[L_i,L_i]$, then $Z^{\cw}_{\alpha}(L)=
Z^{\cw}_{\alpha}(L_1)\oplus Z^{\cw}_{\alpha}(L_2)$.

It is enough to verify that $Z^{\cw}_{\alpha}(L_i)\subseteq
Z^{\cw}_{\alpha}(L)$. Using Theorem 4.7 and an argument similar to
Step $1$, we deduce that\vspace{-.2cm}
\[\dim(H_2^{\alpha}(L))=\dim(H_2^{\alpha}(\f{L}{Z^{\cw}_{\alpha}(L_i)}))
-\dim(Z^{\cw}_{\alpha}(L_i))\vspace{-.1cm},\]from which we have $
Z^{\cw}_{\alpha}(L_i)\subseteq Z^{\cw}_{\alpha}(L)$.

{\it Stem} $3$. The completion of the proof.

If the one of Hom-Lie algebras $(L_1,\alpha_{L_1})$ or
$(L_2,\alpha_{L_2})$ is abelian, the result obtains from Step $1$.
So suppose that both are non-abelian. Then, using again Step $1$,
there are non-abelian-subalgebras $(T_i,\alpha_{T_i})$ of
$(L_i,\alpha_{L_i})$, $i=1,2$, such that
$(L,\alpha_{L})=(T_1\oplus T_2\oplus A,\alpha_{\oplus})$,
$Z^{\cw}_{\alpha}(L)=Z^{\cw}_{\alpha}(T_1)\oplus
Z^{\cw}_{\alpha}(T_2)$,
$Z^{\cw}_{\alpha}(L_i)=Z^{\cw}_{\alpha}(T_i)$ and
$Z(T_i)\subseteq[T_i,T_i]$. But by Step $2$,
$Z^{\cw}_{\alpha}(T_1\oplus T_2)=Z^{\cw}_{\alpha}(T_1)\oplus
Z^{\cw}_{\alpha}(T_2)$. We therefore conclude that
$Z^{\cw}_{\alpha}(L_1\oplus L_2)=Z^{\cw}_{\alpha}(L_1)\oplus
Z^{\cw}_{\alpha}(L_2)$, as desired.
\end{proof}
The following example shows that the regular condition is
essential in Theorem 5.8.
\begin{example}
Let $H(m)$ be the Heisenberg algebra of dimension $2m+1$. Then by
\cite[Theorem 3.4]{NPR}, the Hom-Lie algebra $(H(m),id_{H(m)})$ is
not capable for any $m\geq2$. But if we define $K=span\{t\}$ and
$\alpha_K(t)=0$, then thanks to Theorem 5.7, we infer that
$(H(m),id_{H(m)})\oplus (K,\alpha_K)$ is capable.
\end{example}

\end{document}